\definecolor{links}{rgb}{.2,.1,.5}
\definecolor{cites}{rgb}{.5,.1,.2}
\newcolumntype{d}[1]{D{.}{.}{#1} }
\newcommand\cH{{\mathcal H}}
\newcommand\NN{{\mathbb N}}
\newcommand\RR{{\mathbb R}}
\newcommand\TTmin{{\mathbb T}_{\min}}
\newcommand\TTmax{{\mathbb T}_{\max}}
\newcommand\ZZ{{\mathbb Z}}
\newcommand{\1}{\mathbf 1}
\newcommand\SetOf[2]{\left\{\left.#1\vphantom{#2}\ \right|\ #2\vphantom{#1}\right\}}
\newcommand\smallSetOf[2]{\{{#1}\,|\,{#2}\}}
\DeclareMathOperator{\supp}{supp}
\newcommand\bigO{\mathcal{O}}
\newcommand\minunit{\mathcal{E}_{\min}}
\newcommand\maxunit{\mathcal{E}_{\max}}
\newcommand\monomial[1]{{\sf M}(#1)}
\newcommand\closedmonomial[1]{\overline{\sf M}(#1)}
\newcommand\complementarymonomial[1]{\rotatebox[origin=c]{180}{\sf M}(#1)}
\newcommand\closedcomplementarymonomial[1]{\overline{\rotatebox[origin=c]{180}{\sf M}}(#1)}
\newcommand\hahnseries[2]{{#1}\{\hskip-.25em\{{#2}^\RR\}\hskip-.25em\}}
\DeclareMathOperator\val{ord}
\newcommand\bm{\mathbf}
\DeclareMathOperator\topint{int}
\newcommand\tmi{min} 
\newcommand\tma{max} 
\tikzset{Apex1/.style={draw=black, fill=green!50, circle, inner sep = \nodesize}}
\tikzset{Apex2/.style={draw=black, fill=red!50, circle, inner sep = \nodesize}}
\tikzset{Apex3/.style={fill=black, circle, inner sep = \nodesize}}
\tikzset{InfGen/.style={fill=black, circle, inner sep = \nodesize}}
\tikzset{Border/.style={black,very thick}}
\tikzset{ConeLabel/.style={}}
\tikzset{SearchRegion/.style={red!20}}
\tikzset{DominatedSet/.style={green!20}}
\tikzset{ExtGen/.style={draw=black, circle, inner sep = 2.5pt, fill=black!40}}
\newcommand\inflen{6}
\newcommand{\boundingbox}[5]{
  \coordinate (corner1) at (#1, #2);
  \coordinate (corner2) at (#3, #2);
  \coordinate (corner3) at (#3, #4);
  \coordinate (corner4) at (#1, #4);

  \draw[dashed] (corner1) -- (corner2) -- (corner3) -- (corner4) -- cycle;

  \node [label=below left:{$#5$}] at (corner3) {};
}
\newcommand{\fillbox}[4]{
  \coordinate (corner1) at (#1, #2);
  \coordinate (corner2) at (#3, #2);
  \coordinate (corner3) at (#3, #4);
  \coordinate (corner4) at (#1, #4);

  \fill[blue!10] (corner1) -- (corner2) -- (corner3) -- (corner4) -- cycle;
}
\newcommand\nodesize{1.7pt}
\algrenewcommand{\algorithmiccomment}[1]{\hfill $\rhd$ \emph{#1}}
\algrenewcommand{\algorithmicrequire}{\textbf{Input:}}
\algrenewcommand{\algorithmicensure}{\textbf{Output:}}
\algnewcommand{\Or}{\textbf{or}}
\algnewcommand{\And}{\textbf{and}}
\algnewcommand{\Not}{\textbf{not}\,}
\theoremstyle{plain}
\newtheorem{theorem}{Theorem}
\newtheorem{lemma}[theorem]{Lemma}
\newtheorem{proposition}[theorem]{Proposition}
\newtheorem{corollary}[theorem]{Corollary}
\theoremstyle{definition}
\newtheorem{remark}[theorem]{Remark}
\newtheorem{example}[theorem]{Example}
\newtheorem{question}[theorem]{Question}
\title[Monomial tropical cones for multicriteria optimization]{Monomial tropical cones for \\ multicriteria optimization}
\author{Michael Joswig \and Georg Loho}
\thanks{Research by M. Joswig is partially supported by Einstein Stiftung Berlin and Deutsche Forschungsgemeinschaft (EXC 2046: \enquote{MATH$^+$}, SFB-TRR 109: \enquote{Discretization in Geometry and Dynamics}, SFB-TRR 195: \enquote{Symbolic Tools in Mathematics and their Application}, and GRK 2434: \enquote{Facets of Complexity}).  Additional support by Institut Mittag-Leffler within the program \enquote{Tropical Geometry, Amoebas and Polytopes} is gratefully acknowledged.}
\address{Technische Universit{\"a}t Berlin, Chair for Discrete Mathematics/Geometry, and MPI MiS Leipzig }
\email{joswig@math.tu-berlin.de}
\address{London School of Economics and Political Science, UK }
\email{g.loho@lse.ac.uk}
\subjclass[2010]{
90C29, 
14T05, 
 13D02 
}
\keywords{
  discrete multicriteria optimization; tropical convexity; monomial ideals
}
\begin{document}

\begin{abstract}
  We present an algorithm to compute all $n$ nondominated points of a multicriteria discrete optimization problem with $d$ objectives using at most $\bigO(n^{\lfloor d/2 \rfloor})$ scalarizations.
  The method is similar to algorithms by Przybylski et al.\ (2010) and by Klamroth et al.\ (2015) with the same complexity.
  As a difference, our method employs a tropical convex hull computation, and it exploits a particular kind of duality which is special for the tropical cones arising.
  This duality can be seen as a generalization of the Alexander duality of monomial ideals.
\end{abstract}

\maketitle
 
\section{Introduction}
\noindent
In practical applications of optimization it may occur that there are competing choices for objective functions.
Classical examples include multiple knapsack problems, where one knapsack is to be filled with various items, but the value of each item may depend on individual preferences of various people to decide what to take into the knapsack.
While, in general, it is beyond mathematics to resolve the conflicts of interest arising, it is a relevant task for optimization to exhibit the trade-offs and especially to find those feasible solutions which are locally optimal.
In multicriteria optimization these local optima are known as \emph{Pareto optima}.
Their images in the outcome space are the \emph{nondominated points}.
The main purpose of this paper is to interpret a known technique for computing all nondominated points for a given discrete multicriteria optimization problem in the context of tropical geometry.
Our key observation is that the nondominated points arise as the extremal generators of a special kind of tropical cone.

Tropical geometry is a mathematical field which connects computations in the $(\min,+)$-semiring with other disciplines, including algebraic geometry, commutative algebra, graph theory, statistics, polyhedral geometry, and optimization; cf.~\cite{Tropical+Book} for a general introduction to the subject.
The branch which is most relevant for our purposes is known as tropical linear algebra; cf.~\cite{Butkovic10}.
Today this is often also called \emph{tropical convexity} to stress the key geometric features of that theory which, as in our case, often lead to natural algorithms.
It is a fundamental fact that the tropical cones, which are precisely the $(\min,+)$-semimodules, arise as projections of ordinary convex cones defined over the ordered field of formal Puiseux series with real coefficients \cite{DevelinYu:2007}.
That projection is induced by the valuation map which sends a Puiseux series to its lowest exponent.
Since the real Puiseux series form a real closed field it follows that polyhedral cones, convex polyhedra and polytopes, linear programming etc.\ work precisely as over the real numbers.
In this way, tropical cones and polyhedra inherit many properties and algorithms from their classical counterparts.
The benefit is substantial: This approach shows how the algorithms for determining the set of nondominated points of a discrete multicriteria optimization problem obtained in~\cite{PrzybylskiGandibleuxEhrgott:2010, KLV2015, DaechertEtAl} can be considered as (dual) tropical convex hull computations.
Interestingly, the tropical cones arising in this setting are quite special.
In fact, they can be viewed as generalizations of the monomial ideals arising in commutative algebra; e.g., see \cite{MillerSturmfels:2005,HerzogHibi:2011} and Section~\ref{sec:concluding} below.
Hence we suggest \emph{monomial tropical cone} as a name.

Before we will get to describe our contribution in greater detail we will now formally define our objects of study.
A \emph{multicriteria optimization problem} is of the form
\begin{equation}\label{eq:multiopt}
  \begin{array}{ll}
    \min & f(x) = \bigl( f_1(x), \ldots, f_d(x) \bigr) \\
    \mbox{subject to } & x \in X \enspace .
  \end{array}
\end{equation}
Here $X$ is the \emph{feasible set}.
It is a subset of the \emph{decision space}, which may be any set.
The \emph{objective functions} $f_i$ have the feasible set as their common domain, and they take real values.
We will mainly deal with the image $Z = f(X)$ of the feasible set, the \emph{outcome space}, which is a subset of $\RR^d$.
A point $z \in Z$ is \emph{nondominated} if there is no point $w \in Z$ such that $w_i \leq z_i$ for all $i \in [d] := \{1,2,\dots,d\}$ and $w_{\ell} < z_{\ell}$ for at least one $\ell \in [d]$. 
The set of all nondominated points in $Z$ is the \emph{nondominated set}.
Each nondominated point can be obtained by determining an optimal solution of a scalarization of the multiobjective problem \cite{Ehrgott:2005}.
The latter is an optimization problem derived from \eqref{eq:multiopt} by suitably restricting the feasible set and optimizing with respect to just one objective function derived from $f$.
There are general methods known to determine all nondominated points by successively choosing appropriate scalarizations.
Typically these scalarizations are considered computationally expensive, whence the complexity of a multicriteria optimization problem is measured in the number of scalarizations required.
If $d$ is fixed, the asymptotically tight upper bound is $\bigO(n^{\lfloor d/2\rfloor})$, where $n$ is the number of nondominated points. 
This follows from work of Kaplan et al.\ \cite{KaplanEtAl:2008} on colored orthogonal range counting.
D{\"a}chert et al.\ \cite{DaechertEtAl} presented an enumeration strategy via scalarizations with respect to `boxes' and `local upper bounds', which is asymptotically optimal.
This builds on earlier work using similar decompositions of the search space in~\cite{KLV2015,PrzybylskiGandibleuxEhrgott:2010}.
Our algorithm can be viewed as a variation of their idea and requires the same number of scalarizations.
The essential new contribution is the observation that the nondominated set can be interpreted as the extremal generators of a certain kind of tropical cone.
This allows us to use an adaptation of the tropical double description method \cite{DoubleDescription:2010} to deduce an enumeration scheme which results in the same number of subproblems and is also asymptotically worst case optimal.
In this way the known upper bound can also be derived from the tropical upper bound theorem of Allamigeon, Gaubert and Katz \cite{AGK:ExtremePoints2011}.
The ordinary double description method, also known as Fourier--Motzkin elimination, is a standard algorithm for computing (dual) ordinary convex hulls \cite{FukudaProdon:1996}.
The dual convex hull problem asks to convert an exterior description of an ordinary convex polyhedron (in terms of linear inequalities) into an interior description (in terms of generating points and rays).
It can be seen as a parameterized linear optimization problem where the feasible region is fixed and the linear objective function is allowed to vary arbitrarily.
Our results show that all discrete multicriteria optimization problems exhibit the same geometric structure. 
It turns out that the monomial tropical cones arising in multicriteria optimization already made an appearance as the `$i$th polar cones' in \cite{AllamigeonGaubertKatz:2011}.
Yet, apparently, they have not been studied in full detail before.

In the remainder of this introduction we will give an outline of the present article.
Section~\ref{sec:tropical} starts out with the basic notions from tropical convexity.
Our first main result, Theorem~\ref{thm:complementary-cones}, states that monomial tropical cones always come in pairs, one with respect to $\max$ and the other with respect to $\min$ as the tropical addition.
This can be seen as a generalization of Alexander duality of monomial ideals \cite[\S5.2]{MillerSturmfels:2005}.
The subsequent Section~\ref{sec:upper-bound} is devoted to deriving an upper bound for the number of generators of the dual monomial tropical cone in terms of the number of generators of the primal tropical cone.
This follows from the tropical upper bound theorem of Allamigeon, Gaubert and Katz \cite{AllamigeonGaubertKatz:2011}.
As an additional contribution we give a variant of their proof, which is rather short.
The main ingredient is McMullen's upper bound theorem for classical convex polytopes \cite{McMullen:1970}, which comes in by lifting to real Puiseux series.
The special case of monomial tropical cones can also be derived from~\cite[Theorem 6.3]{BayerPeevaSturmfels:1998} or \cite{KaplanEtAl:2008}.
The latter additionally shows that, for fixed $d$, that upper bound can actually be attained, at least asymptotically.
This has already been applied in multicriteria optimization~\cite{KLV2015}, and it is used to determine the complexity of the algorithm. 
The Section~\ref{sec:computing-nondominated-set} is devoted to describing our main algorithm, Algorithm~\ref{algo:non-dominated-set}, which computes the nondominated set of a discrete multicriteria optimization problem.
This provides a new point of view on the redundancy elimination and redundancy avoidance schemes developed in~\cite{PrzybylskiGandibleuxEhrgott:2010,KLV2015}.
We end that section with a complexity analysis and one complete example arising from a multicriteria knapsack problem.
The paper closes with Section~\ref{sec:concluding}, which contains a few remarks concerning the relationship of our results with topics in commutative algebra and some open problems.
There is an established connection between discrete optimization and commutative algebra; e.g., see~\cite{DeLoeraHemmeckeKoeppe:2013}.
Hence it seems promising to study possible applications of our algorithm to topics in algebra, but this is beyond the scope of the present paper.

\section{Monomial Tropical Cones} \label{sec:tropical}
\noindent
The \emph{\tmi-tropical semiring} is the set $\TTmin=\RR\cup\{\infty\}$ equipped with $\min$ and $+$ as its addition and multiplication, respectively.
Several classical notions from linear algebra and convexity have analogs over $\TTmin$.
We introduce a special class of tropical cones which arise naturally in multicriteria optimization.
As their most important feature they admit a special kind of duality, which is not present in general tropical cones.

\subsection{Generators and tropical halfspaces}
Throughout the following we fix an integer $d\geq 1$.
A \emph{\tmi-tropical cone} $C$ is a nonempty subset of $\TTmin^{d+1}$ which is closed with respect to taking \tmi-tropical scalar combinations, i.e.,
\[
\bigl( \min( \lambda + x_0, \mu + y_0 ), \dots, \min( \lambda + x_d, \mu + y_d ) \bigr) \, \in \, C \quad \text{for all } \lambda,\mu\in \TTmin \text{ and } x,y\in C \enspace .
\]
It follows that any \tmi-tropical cone contains the point $(\infty,\infty,\dots,\infty)$.
Notice that we take indices $0,1,\dots,d$ for vectors in $\TTmin^{d+1}$.
A set $G\subset\TTmin^{d+1}$ is said to \emph{generate} the \tmi-tropical cone $C$ if this is the smallest \tmi-tropical cone which contains $G$.
\emph{Scaling} the generators tropically, i.e., adding multiples of the all-ones-vector $\1$ does not change the tropical cone.
If $C$ is finitely generated, then there is a generating set which is minimal with respect to inclusion; and this is unique, up to tropical scaling; cf.~\cite[Thm.~3.3.9]{Butkovic10}.
The elements of that minimally generating set are the \emph{extremal generators} of $C$.

Let $a$ be a vector in $\TTmin^{d+1}$.
The set $\supp(a)=\smallSetOf{i}{a_i\neq\infty}$ is its \emph{support}.
For disjoint nonempty subsets $I,J\subset\supp(a)$ the set
\begin{equation}\label{eq:halfspace}
  \SetOf{x\in\TTmin^{d+1}}{\min(x_i+a_i \mid i\in I) \leq \min(x_j+a_j \mid j\in J)}
\end{equation}
is the \emph{closed \tmi-tropical halfspace} in $\TTmin^{d+1}$ of \emph{type} $(I,J)$ with \emph{apex} $-a$.
Each \tmi-tropical halfspace is an example of a \tmi-tropical cone.
The following basic result was proved by Gaubert \cite{Gaubert:PhD}; see also \cite{Tropical+halfspaces} and~\cite{GaubertKatz07}.
It is the tropical analog of the ``Main Theorem for Cones'', as it is called in \cite[Thm.~1.3]{Ziegler:Lectures+on+polytopes}.

\begin{theorem}\label{thm:cones}
  Let $C$ be a \tmi-tropical cone which is finitely generated.
  Then $C$ is the intersection of finitely many closed \tmi-tropical halfspaces.
  Conversely, each such finite intersection is a finitely generated tropical cone.
\end{theorem}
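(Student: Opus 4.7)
The plan is to reduce both directions to the classical Minkowski--Weyl theorem via the lift to real Puiseux series emphasized in the introduction. Let $\KK$ denote the real closed ordered field of real Puiseux series in an infinitesimal $t$, and let $\val\colon\KK_{\geq 0}^{d+1}\to\TTmin^{d+1}$ be the coordinatewise valuation, with $\val(0)=\infty$. By the Develin--Yu correspondence \cite{DevelinYu:2007}, $\val$ carries classical polyhedral cones in the non-negative orthant surjectively onto finitely generated $\tmi$-tropical cones, and every such tropical cone lifts in this way.

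First I would handle the forward direction. Given generators $g_1,\dots,g_k\in\TTmin^{d+1}$ of $C$, I would lift each to $\tilde g_i\in\KK_{\geq 0}^{d+1}$ with $\val(\tilde g_i)=g_i$, and form the classical cone $\tilde C=\pos(\tilde g_1,\dots,\tilde g_k)$. Since $\KK$ is an ordered field, classical Minkowski--Weyl provides a finite list of linear inequalities $\tilde a^{(\ell)}\cdot\tilde x\geq 0$ defining $\tilde C$. For each $\ell$ I would split the coefficient vector into $I_\ell=\smallSetOf{i}{\tilde a^{(\ell)}_i>0}$ and $J_\ell=\smallSetOf{j}{\tilde a^{(\ell)}_j<0}$, rewrite the inequality as $\sum_{i\in I_\ell}\tilde a^{(\ell)}_i\tilde x_i\geq\sum_{j\in J_\ell}|\tilde a^{(\ell)}_j|\tilde x_j$, and take valuations. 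Because the Puiseux summands on each side are non-negative, their leading coefficients cannot cancel, so the valuation of each sum equals the minimum of the valuations of its summands. This converts the classical inequality into a tropical inequality of exactly the form \eqref{eq:halfspace}, with the apex read off from $\val(\tilde a^{(\ell)})$.

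For the converse, starting from closed $\tmi$-tropical halfspaces $H_1,\dots,H_m$, I would lift each $H_\ell$ to a classical linear halfspace $\tilde H_\ell=\{\tilde x:\tilde a^{(\ell)}\cdot\tilde x\geq 0\}$ whose coefficients carry the prescribed valuations, with signs chosen according to the partition $(I_\ell,J_\ell)$ of the support of the apex. The classical intersection $\tilde C:=\bigcap_\ell\tilde H_\ell\cap\KK_{\geq 0}^{d+1}$ is a polyhedral cone, and is therefore finitely generated by Minkowski--Weyl, say $\tilde C=\pos(\tilde g_1,\dots,\tilde g_k)$. The tropical cone $\tcone(\val(\tilde g_1),\dots,\val(\tilde g_k))$ is visibly contained in $\bigcap_\ell H_\ell$, and equality will follow once the simultaneous lifting property of $\val$ is invoked.

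The hard part will be verifying that $\val$ genuinely commutes with these operations. The easy inclusions are immediate: each extremal generator $g_i$ satisfies every derived tropical halfspace, and valuations of classical intersections land inside intersections of valuations. The nontrivial point is surjectivity---every $x$ lying in an intersection of tropical halfspaces must admit a \emph{single} non-negative Puiseux lift $\tilde x$ that satisfies all of the $\tilde H_\ell$ simultaneously. This simultaneous lifting statement is precisely the content of \cite{DevelinYu:2007}, and relies on the real-closedness of $\KK$. Once it is in place, both halves of Theorem~\ref{thm:cones} reduce to classical Minkowski--Weyl applied over $\KK$; a purely combinatorial alternative via a tropical double description argument is also available, but it merely duplicates the bookkeeping already packaged classically.
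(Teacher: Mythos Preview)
The paper does not actually supply a proof of Theorem~\ref{thm:cones}; it attributes the result to Gaubert and merely remarks that several proofs are known, singling out the tropical double description method of \cite{DoubleDescription:2010} as one constructive route. So there is no ``paper's own proof'' to compare against here, and your Puiseux-series strategy is a genuinely different approach from the references cited.

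That said, your forward direction has a real gap. After lifting the generators, forming $\tilde C=\pos(\tilde g_1,\dots,\tilde g_k)$, and writing $\tilde C$ as $\bigcap_\ell \tilde H_\ell$ via classical Minkowski--Weyl, you tropicalize each $\tilde H_\ell$ to a tropical halfspace $H_\ell$. You correctly obtain $C=\val(\tilde C)\subseteq\bigcap_\ell H_\ell$, but the reverse inclusion is \emph{not} what Develin--Yu provides. The simultaneous-lifting statement you invoke (and which the paper itself uses later, citing \cite[Proposition~2.6]{ABGJ:2015}, in the proof of Theorem~\ref{thm:upper-bound}) runs in the opposite direction: given tropical halfspaces, one can \emph{choose} Puiseux lifts so that valuation commutes with the intersection. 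Here, however, the classical halfspaces $\tilde H_\ell$ are handed to you by Minkowski--Weyl over $\KK$; you do not get to pick them, and for non-generic facet normals the tropicalized system can cut out a strictly larger set than $\val(\tilde C)$. You would need an additional argument---e.g., that monomial lifts of the $g_i$ force the facet normals of $\tilde C$ to be sufficiently generic, or a direct verification that the facet description of a cone generated by monomial vectors forms a tropical basis---and that argument is essentially the content of the theorem.

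There is also a circularity hazard worth flagging: several of the available proofs of the lifting compatibility (including the one behind \cite[Proposition~2.6]{ABGJ:2015}) ultimately rest on already knowing that tropical H-cones and V-cones coincide. If you want a self-contained argument along your lines, you should make sure the lifting lemma you quote does not itself presuppose Theorem~\ref{thm:cones}. Your converse direction, by contrast, is on much firmer ground and parallels exactly what the paper does for the upper bound theorem.
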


By now various proofs for Theorem~\ref{thm:cones} are known, most of which are, in fact, constructive.
For instance, the \emph{tropical double description method} provides an algorithm \cite{DoubleDescription:2010}.
The  \emph{\tmi-tropical unit vectors} $e^{(0)}, e^{(1)}, \ldots, e^{(d)} \in \TTmin^{d+1}$ are defined by
\[
e^{(i)}_{k} \ = \
\begin{cases}
  0  & \mbox{ if }  i = k \\
  \infty & \mbox{ otherwise }
\end{cases}
\qquad \text{for } 0 \leq i,k \leq d \enspace .
\]
We set
\[
\minunit \ = \ \bigl\{ e^{(1)}, e^{(2)}, \dots, e^{(d)} \bigr\} \ \subseteq \ \TTmin^{d+1} \enspace . 
\]
Observe that the $0$th tropical unit vector is omitted.

\begin{example}\label{exmp:sector}
  Let $a\in\TTmin^{d+1}$ be a point with support $\supp(a)=\{0\}\sqcup J$ for $J$ not empty.
  The \tmi-tropical halfspace
  \begin{equation}\label{eq:sector}
    \SetOf{x\in\TTmin^{d+1}}{x_0+a_0 \leq \min(x_j+a_j \mid j\in J)}
  \end{equation}
  has type $(\{0\},J)$.
  A nonredundant set of generators is given by the $d+1$ points $g^{(0)}, g^{(1)}, \ldots, g^{(d)}$ where
  \[
  g^{(i)}_k  \ =  \
  \begin{cases}
    0 & \mbox{ if } k = 0 \\
    a_0-a_k  & \mbox{ if }  k = i\\
    \infty & \mbox{ otherwise } \enspace ,
  \end{cases}
  \]
  for $i\in J$ and $g^{(i)}=e^{(i)}$ for $i\not\in J$.
  The intersection of the \tmi-tropical halfspace~\eqref{eq:sector} with~$\RR^{d+1}$, which is a subset of $\TTmin^{d+1}$, is convex in the ordinary sense.
\end{example}

Dual to $\TTmin$ is the \emph{\tma-tropical semiring} $\TTmax=\RR\cup\{-\infty\}$, which is equipped with the operations $\max$ and $+$.
Replacing min by max in all of the above leads to \tma-tropical cones, \tma-tropical halfspaces etc.
Due to the equality
\begin{equation}\label{eq:minmax}
  \min(-x,-y) \ = \ -\max(x,y)
\end{equation}
the map $x\mapsto -x$ from $\TTmin$ to $\TTmax$ is an isomorphism of semirings.
Note that the apex $-a$ of the \tmi-tropical halfspace \eqref{eq:halfspace} lies in $\TTmax^{d+1}$.
We let $\maxunit = -\minunit$, which is contained in $\TTmax^{d+1}$.

\begin{remark}
  In the sequel we will use some very mild topological notions.
  Thus we need to briefly sketch the setup.
  The real vector space $\RR^{d+1}$ is equipped with its natural Euclidean topology.
  One way of constructing this topology is via the order topology on the reals and taking products.
  The order topology is also defined on $\TTmin$, where the open intervals form a subbasis, and this extends the order topology on $\RR$.
  With $\RR\1 = \{\lambda \cdot (1,\dots,1) \colon \lambda \in \RR\}$, it ensues that the quotient topology on
  \begin{equation}\label{eq:tp}
    \bigl(\TTmin^{d+1}\setminus\{(\infty,\dots,\infty)\}\bigr)/\RR\1
  \end{equation}
  is compact. 
  In fact, the pair $((\TTmin^{d+1}\setminus\{(\infty,\dots,\infty)\})/\RR\1,\RR^{d+1}/\RR\1)$ is homeomorphic with the pair $(\Delta_d,\topint\Delta_d)$, where $\Delta_d$ is the $d$-dimensional (standard-)simplex, and $\topint\Delta_d$ is its interior.
  The topological space \eqref{eq:tp} is the \emph{$d$-dimensional tropical projective space} with respect to $\min$.
  Clearly, exchanging $\min$ by $\max$ essentially gives the same.
  Note, however, that $\TTmin^{d+1}$ and $\TTmax^{d+1}$ differ as sets, with $\RR^{d+1}$ as their intersection.
  See \cite{JoswigLoho:2016} for more details on tropical convexity in the tropical projective space.
\end{remark}

\begin{example}\label{exmp:sector-as-max}
  The intersection $S$ of the \tmi-tropical halfspace \eqref{eq:sector} with $\RR^{d+1}$ is convex in the ordinary sense.
  It follows from \cite[Prop.~48]{JoswigLoho:2016} that $S$ is a `weighted digraph polyhedron' and thus the topological closure of $S$ in $\TTmax^{d+1}$ is a \tma-tropical cone.
  That \tma-tropical cone admits an exterior description in $\TTmax^{d+1}$, and thus
  \[
  S \ = \ \bigcap_{i\in\supp(a) \setminus \{0\}} \SetOf{x \in \RR^{d+1}}{x_0 + a_0\leq x_i + a_i}
  \]
  is the intersection of finitely many \tma-tropical halfspaces in $\RR^{d+1}$.
  The $d+1$ points in the set $\{-a\}\cup\maxunit$ form the extremal generators of the closure of $S$ in $\TTmax^{d+1}$.
  This is a \tma-tropical cone arising as the intersection of the sets $\SetOf{x \in \TTmax^{d+1}}{x_0 + a_0\leq x_i + a_i}$ for $i\in\supp(a) \setminus \{0\}$. 
\end{example}

Let $G \subseteq \TTmax^{d+1}$ be finite such that $0$ is contained in the support of each point.
We define 
\[
\closedmonomial{G} \ = \ \bigcup_{g \in G} \SetOf{x\in\TTmax^{d+1}}{x_0-g_0 \leq \min(x_j-g_j \mid j\in \supp(g)\setminus\{0\})} \enspace ,
\]
and let $\monomial{G}=\closedmonomial{G}\cap\RR^{d+1}$.
By construction the latter set is a finite union of the \tmi-tropical halfspaces intersected with $\RR^{d+1}$ studied in Examples~\ref{exmp:sector} and~\ref{exmp:sector-as-max}.
See also Figure~\ref{fig:complementary-cones} below.

\begin{lemma}\label{lem:monomial-maxcone}
  The set $\closedmonomial{G}$ is the \tma-tropical cone in $\TTmax^{d+1}$ generated by the finite set $G\cup\maxunit$.
\end{lemma}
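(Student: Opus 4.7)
The plan is to show the two inclusions $\closedmonomial{G} \subseteq C$ and $C \subseteq \closedmonomial{G}$, where $C$ denotes the $\max$-tropical cone generated by $G \cup \maxunit$ in $\TTmax^{d+1}$.

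For $\closedmonomial{G} \subseteq C$, I would argue that each of the sets
\[
H_g \ = \ \bigl\{x \in \TTmax^{d+1} : x_0 - g_0 \leq \min(x_j - g_j \mid j \in \supp(g)\setminus\{0\})\bigr\}
\]
making up the union $\closedmonomial{G} = \bigcup_{g \in G} H_g$ is itself the $\max$-tropical cone generated by $\{g\} \cup \maxunit$. This follows from Example~\ref{exmp:sector-as-max} applied with apex $g \in \TTmax^{d+1}$, once one verifies that $H_g$ coincides with the $\max$-tropical closure of $H_g \cap \RR^{d+1}$; the latter is a minor approximation exercise (a boundary point $x \in H_g$ with $x_0 = -\infty$ can be approximated by replacing that coordinate, together with any $-\infty$ coordinates indexed by $\supp(g)$, by coupled large negative numbers so as to preserve the halfspace inequality, and any remaining $-\infty$ slots independently). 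Since $\{g\} \cup \maxunit \subseteq G \cup \maxunit$, every $H_g$ lies in $C$, and hence so does $\closedmonomial{G}$.

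For the reverse inclusion, take an arbitrary $x \in C$ and write it as
\[
x \ = \ \max\Bigl(\max_{g \in G}(\lambda_g + g),\ \max_{i=1}^{d}(\mu_i + e^{(i)})\Bigr)
\]
with coefficients $\lambda_g,\mu_i \in \TTmax$. Because every vector in $\maxunit$ has $-\infty$ in position $0$, the $0$th coordinate satisfies $x_0 = \max_{g \in G}(\lambda_g + g_0)$. Assuming $x_0 \in \RR$, I would pick $g^* \in G$ realizing this maximum, so that $x_0 - g^*_0 = \lambda_{g^*}$. For every $j \in \supp(g^*) \setminus \{0\}$ the componentwise estimate $x_j \geq \lambda_{g^*} + g^*_j$ rearranges to $x_0 - g^*_0 \leq x_j - g^*_j$, which is exactly the defining halfspace inequality of $H_{g^*}$; thus $x \in H_{g^*} \subseteq \closedmonomial{G}$. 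The degenerate case $x_0 = -\infty$ forces $\lambda_g = -\infty$ for every $g \in G$, and then $x$ lies trivially in $H_g$ for any $g$.

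The main obstacle I foresee is this reverse inclusion, since a priori a union of $\max$-tropical cones need not be a $\max$-tropical cone. What makes the argument work is the very particular shape of the apices in $G$ — each has $0$ in its support — so that one can pinpoint the relevant $g^*$ by inspecting the $0$th coordinate of $x$ alone. The approximation step in the first inclusion and the $-\infty$ bookkeeping are routine.
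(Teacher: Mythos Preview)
Your proposal is correct and follows essentially the same route as the paper. For $\closedmonomial{G}\subseteq C$ both you and the paper invoke Example~\ref{exmp:sector-as-max} to identify each $H_g$ with the \tma-tropical cone on $\{g\}\cup\maxunit$. For the reverse inclusion the paper shows $\closedmonomial{G}$ is closed under \emph{binary} \tma-tropical combinations (hence is a cone containing the generators), while you take a general combination and locate the right $g^*$ via the $0$th coordinate; the governing observation---that the element whose $0$th coordinate dominates determines which $H_g$ absorbs the combination---is identical in both arguments.
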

\begin{proof}
  Let $C$ denote the \tma-tropical cone generated by $G\cup\maxunit$, and we want to show that $C$ agrees with $\closedmonomial{G}$.
  We have $G\cup\maxunit\subseteq\closedmonomial{G}$, and it follows from Example~\ref{exmp:sector-as-max} that $\closedmonomial{G}$ is a subset of $C$.

  For the reverse inclusion we need to show that $\closedmonomial{G}$ is a \tma-tropical cone in $\TTmax^{d+1}$.
  To this end consider $g,h\in G$ distinct.
  Let $x\in \closedmonomial{g}$, $y\in \closedmonomial{h}$ and $\lambda,\mu\in\TTmax$.
  Without loss of generality we may assume that $\lambda + x_0 \geq \mu + y_0$.
  But this entails that the \tma-tropical linear combination
  \begin{equation}\label{eq:max-linear-combination}
    \bigl( \max( \lambda + x_0, \mu + y_0 ), \dots, \max( \lambda + x_d, \mu + y_d ) \bigr)
  \end{equation}
  satisfies the inequality
  \begin{equation*}
  \begin{split}
    \lambda + x_0 - g_0 \ &\leq \ \min(\lambda+x_j-g_j \mid j\in \supp(g)\setminus\{0\}) \\
    &\leq \ \min\bigl( \max(\lambda+x_j,\mu+y_j)-g_j \mid j\in \supp(g)\setminus\{0\}\bigr) \enspace .
  \end{split}
  \end{equation*}
  That is, the \tma-tropical linear combination \eqref{eq:max-linear-combination} is contained in $\closedmonomial{g}$, and this proves our claim.
\end{proof}
Observe that
\begin{equation}\label{eq:orthant}
  \monomial{0} \ = \ \bigl(\{0\} \times \RR_{\geq 0}^d\bigr) + \RR\1 \enspace .
\end{equation}
Identifying $\RR^d$ with $\smallSetOf{x\in\RR^{d+1}}{x_0=0}$ allows to view $G\subset\NN^d$ as a subset of $\RR^{d+1}$.
In this way the integral points in $\monomial{G}$ with zero first coordinate correspond to the set of monomials in the monomial ideal generated by $G$; see \cite{MillerSturmfels:2005}, \cite {HerzogHibi:2011} and also Section~\ref{sec:concluding} below.
For this reason we call the set $\closedmonomial{G}$ the \emph{monomial \tma-tropical cone} generated by $G$, and $\monomial{G}$ is its \emph{real part}.

\begin{remark}
  We defined monomial tropical cones by giving the $0$th coordinate a special role.
  However, the space $\TTmin^{d+1}$ is symmetric with respect to permuting coordinates, which is why that particular choice of the coordinate is inessential.
  This means that there is a natural notion of an \emph{$i$-monomial \tmi-tropical cone} which generalizes the above.
  Furthermore, since the single inequality in~\eqref{eq:halfspace} is equivalent to the system
  \[
  \min(x_i+a_i \mid i\in I) \ \leq \ x_j+a_j \quad \mbox{ for each } j\in J
  \]
  of at most $d$ inequalities, it follows that each tropical cone can be written as the intersection of at most $d+1$ monomial tropical cones, one for each $i \in [d] \cup \{0\}$.
  The $i$-monomial tropical cones are precisely the `$i$th polar cones' in \cite{AllamigeonGaubertKatz:2011}.
  We propose a different name to stress the connection to commutative algebra; cf.\ Section~\ref{sec:concluding} below.
\end{remark}

\begin{lemma} \label{lem:interior-union-sectors}
  The interior of the real part of the monomial \tma-tropical cone $\monomial{G}$ in $\RR^{d+1}$ for $G \subseteq \TTmax^{d+1}$ equals
  \begin{equation}\label{eq:interior}
    \bigcup_{g \in G} \SetOf{x\in\RR^{d+1}}{x_0-g_0 < \min(x_j-g_j \mid j\in \supp(g)\setminus\{0\})} \enspace .
  \end{equation}
\end{lemma}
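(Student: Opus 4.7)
The plan is to prove the two inclusions separately. Write $U$ for the right-hand side of \eqref{eq:interior}. The easy direction is $U \subseteq \topint(\monomial{G})$: each set in the union defining $U$ is the solution set of finitely many strict linear inequalities $x_j - x_0 > g_j - g_0$, so it is open in the Euclidean topology on $\RR^{d+1}$. Hence $U$ itself is open, and since each summand sits inside the corresponding closed halfspace appearing in the defining union of $\monomial{G}$, we get $U \subseteq \monomial{G}$ and therefore $U \subseteq \topint(\monomial{G})$.

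For the reverse inclusion $\topint(\monomial{G}) \subseteq U$, I would argue by contradiction. Suppose $x \in \topint(\monomial{G}) \setminus U$. For every $g \in G$, one of two things happens: either $x$ fails the weak inequality $x_0 - g_0 \leq \min(x_j - g_j : j \in \supp(g) \setminus \{0\})$ outright, or it holds with equality (otherwise $x$ would lie in $U$). Split $G$ accordingly into $G_= \sqcup G_>$, where $G_=$ consists of those $g$ for which equality holds and $G_>$ consists of those where $x_0 - g_0 > x_{j_g} - g_{j_g}$ for some index $j_g \in \supp(g)\setminus\{0\}$.

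The key step is to produce a perturbation of $x$ that escapes every closed halfspace of the union simultaneously. Take $y_\varepsilon = x + \varepsilon\, e$, where $e = (1,0,\ldots,0)$ is the standard Euclidean unit vector in direction $0$, and $\varepsilon > 0$. For any $g \in G_=$ we have $(y_\varepsilon)_0 - g_0 = x_0 - g_0 + \varepsilon$, while $\min((y_\varepsilon)_j - g_j : j\in \supp(g)\setminus\{0\}) = \min(x_j - g_j : j\in \supp(g)\setminus\{0\})$ is unchanged, so the weak inequality is violated for every $\varepsilon > 0$. For $g \in G_>$ the strict inequality $x_0 - g_0 > x_{j_g} - g_{j_g}$ persists under small perturbations by continuity, and remains strict after adding $\varepsilon$ to the $0$th coordinate.

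Since $G$ is finite, there exists a single $\varepsilon > 0$ small enough to work for every $g \in G_>$. For this $\varepsilon$, the point $y_\varepsilon$ fails the defining inequality of every closed halfspace in the union, so $y_\varepsilon \notin \monomial{G}$. But $y_\varepsilon$ can be made arbitrarily close to $x$, contradicting $x \in \topint(\monomial{G})$. The main subtlety is only the choice of the perturbation direction; it must be chosen so as to strictly increase $x_0 - g_0$ while not decreasing any $x_j - g_j$ for $j \neq 0$, and the Euclidean unit vector $e$ does exactly this uniformly in $g$.
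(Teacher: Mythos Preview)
Your proof is correct and follows essentially the same route as the paper: both argue by perturbing a boundary point out of $\monomial{G}$, the paper using $z-\varepsilon(0,1,\dots,1)$ and you using $x+\varepsilon(1,0,\dots,0)$, which differ by a multiple of $\1$ and are therefore equivalent since $\monomial{G}$ is invariant under $\RR\1$. A minor remark: for $g\in G_>$ your perturbation only increases $x_0-g_0$ while leaving the other $x_j-g_j$ fixed, so the violation holds for \emph{every} $\varepsilon>0$ and the appeal to finiteness of $G$ is not actually needed.
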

\begin{proof}
  For a point $z$ in $\RR^{d+1}$ let
  \[
  G(z) \ = \ \SetOf{g\in G}{z\in \monomial{g}} \enspace .
  \]
  Now consider a point $z$ in $\monomial{G}$.
  If $z$ lies in the boundary of $\monomial{g}$ for each $g\in G(z)$ then $z - \epsilon (0,1,\ldots,1)$ is not contained in $\monomial{G}$ for every $\epsilon > 0$.
  This implies that if $z$ is an interior point then it must be contained in the interior of $\monomial{g}$ for at least one $g\in G(z)$.
\end{proof}

Observe that, by construction, the set \eqref{eq:interior} is a \tma-tropically convex set which is open. We call it the \emph{interior} of $\closedmonomial{G}$. Here, a set $S$ is \tma-tropically convex if $\max(x,\lambda\cdot \1 + y) \in S$ for all $x,y \in S$ and $\lambda \leq 0$, where the $\max$ is taken componentwise.

\begin{lemma}\label{lem:monomial-halfspace}
  Each \tma-tropical halfspace which contains the monomial \tma-tropical cone $\closedmonomial{G}$ has type $(\{0\},J)$ for some nonempty set $J\subset\{1,2,\dots,d\}$.
  Equivalently, the intersection of such a tropical halfspace with $\RR^{d+1}$  has the form
  \begin{equation}\label{eq:monomial-halfspace}
    \SetOf{x\in\RR^{d+1}}{x_0-a_0 \leq \max(x_j-a_j \mid j\in J)}
  \end{equation}
  for some point $a\in\TTmin^{d+1}$ with $\supp(a)=\{0\}\sqcup J$.
\end{lemma}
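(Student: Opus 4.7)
The plan is to determine, by a direct analysis of the defining inequality, the possible types of a max-tropical halfspace $H$ that contains the monomial cone $\monomial{G}$. Writing $H$ in the form $\{x : \max(x_i - a_i \mid i \in I) \leq \max(x_j - a_j \mid j \in J)\}$ for some $a \in \TTmin^{d+1}$ and disjoint nonempty $I, J \subseteq \supp(a)$ (the max-tropical analogue of~\eqref{eq:halfspace}, consistent with the convention used in Example~\ref{exmp:sector-as-max}), my goal is to show that $I = \{0\}$. The equivalent form stated in the lemma then follows by restricting the inequality to $\RR^{d+1}$ and, if necessary, normalizing $a$ to have support exactly $\{0\} \sqcup J$; this normalization is harmless because replacing $a_k$ by $\infty$ for any index $k \notin I \cup J$ alters neither side of the defining inequality.

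The central observation is a strong monotonicity property of $\monomial{G}$: for any $p \in \monomial{G}$ and any $k \in \{1, \dots, d\}$, adding an arbitrary nonnegative amount to the $k$th coordinate of $p$ yields another point of $\monomial{G}$. Indeed, $p$ lies in some $\monomial{g}$, and the defining inequality $p_0 - g_0 \leq \min(p_j - g_j \mid j \in \supp(g) \setminus \{0\})$ can only be relaxed when we increase $p_k$. I argue by contradiction: suppose some $k \in \{1, \dots, d\}$ belongs to $I$. Then $k \in \supp(a)$ and hence $a_k \in \RR$. Picking any $p \in \monomial{G}$ and letting its $k$th coordinate grow unboundedly while fixing the rest produces a family of points in $\monomial{G} \subseteq H$ on which the left-hand side of the halfspace inequality contains the term $p_k + t - a_k$, tending to $+\infty$ as $t \to \infty$; the right-hand side, however, stays constant because $k \notin J$ by disjointness. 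This contradicts $H \supseteq \monomial{G}$, so $I \subseteq \{0\}$. Nonemptiness of $I$ forces $I = \{0\}$, disjointness then confines $J$ to $\{1, \dots, d\}$, and the stated form of $H \cap \RR^{d+1}$ is obtained simply by reading off the inequality with $I = \{0\}$.

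The main obstacle, such as it is, appears to be bookkeeping rather than conceptual: one needs to check cleanly that monotonicity in the positive directions passes from each $\monomial{g}$ to their union $\monomial{G}$, and to justify the final normalization of the support of $a$. The substantive content is the built-in asymmetry of $\monomial{G}$ between the coordinate $0$ and the remaining $d$ coordinates: the cone extends unboundedly in every positive coordinate direction $k \geq 1$, which is structurally incompatible with any halfspace inequality that places such an index on the upper-bounded side.
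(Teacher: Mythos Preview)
Your argument is correct and follows essentially the same route as the paper: both exploit that $\monomial{G}$ contains the ray $p + \RR_{\geq 0}\cdot e_k$ for every $p\in\monomial{G}$ and every $k\in\{1,\dots,d\}$, which forces any index $k\geq 1$ off the left-hand side of the defining inequality, leaving $I=\{0\}$. Your version is more explicit about the contradiction and the normalization of $\supp(a)$, but the underlying idea is identical.
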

\begin{proof}
  By definition, the monomial tropical cone $\closedmonomial{G}$ contains the rays
  \[
  x + \RR_{\geq 0}^{d+1}\cdot(0,\,\underbrace{0,\dots,0}_{j-1},\, 1,\, \underbrace{0,\dots,0}_{d-j} )
  \]
  for all $x\in \monomial{G}$ and $1\leq j\leq d$, but not the ray $x + \RR_{\geq 0}^{d+1}\cdot(1,0,\dots,0)$.
  This implies that no variable indexed by $1, 2, \ldots, d$ can occur on the left hand side of an inequality of the form~\eqref{eq:halfspace}, and the claim follows. 
\end{proof}

Let us define $\complementarymonomial{G}$ as the closure of the complement of $\monomial{G}$ in $\RR^{d+1}$.
Further we let $\closedcomplementarymonomial{G}$ be its closure in $\TTmin^{d+1}$ with respect to min.
Note that seeing $\closedmonomial{G}$ and $\closedcomplementarymonomial{G}$ simultaneously requires to go to the union $\TTmin^{d+1}\cup\TTmax^{d+1}$.

\begin{example} \label{ex:complementary-cones}
  Consider the points
\begin{align*}
a = ( 0,1,0), \quad b=(0,0,2), \quad c=(0,-3,\infty)\quad \mbox{ in } & \TTmin^3 \quad \mbox{ and } \\
f = ( 0,1,-\infty), \quad g= (0,0,0), \quad h=(0,-3,2)\quad \mbox{ in } & \TTmax^3 \enspace .
\end{align*}
Then $\closedmonomial{f,g,h}$ is the \tma-tropical cone generated by $\{f,g,h,-e^{(1)},-e^{(2)}\}$. Further, $\closedcomplementarymonomial{a,b,c}$ is the \tmi-tropical cone generated by $\{a,b,c,e^{(1)},e^{(2)}\}$. 
The real part $\RR^3$ decomposes into $\complementarymonomial{f,g,h}=-\monomial{-a,-b,-c}$ and $\monomial{f,g,h}$, where the intersection is homeomorphic to $\RR^{2}$.
Those two cones are shown in Figure~\ref{fig:complementary-cones}. Note that the monomial tropical cones always contain the whole line $x + \RR\1$ for each point $x$.
This allows to flatten the picture to $\RR^2$ by choosing the representative of $x$ in $x + \RR\1$ with $0$th coordinate $0$.
This is convenient, but the behavior at infinity is somewhat hard to visualize since in the flattened picture, e.g., $(\infty,\infty,0)$ and $(0,1,-\infty)$ lie in the same direction.
\end{example}

\begin{figure}[htb]
  \centering
  \begin{tikzpicture}[scale=1.1]

  \fillbox{-4}{-5}{4}{5};
  \boundingbox{-\inflen}{-\inflen}{4}{5}{\RR^3};
  \boundingbox{-4}{-5}{\inflen}{\inflen}{};

  \node[Apex1, label=left:{$g = (0,0,0)$}] (g) at (0,0) {};
  \node[Apex1, label=above right:{$h = (0,-3,2)$}] (h) at (-3,2) {};
  \node[Apex1, label=right:{$f = (0,1,-\infty)$}] (f) at (1,-5.5) {};
    
  \node[ConeLabel] at (2.6,3.3) {$\monomial{f,g,h}$};

  \node[Apex2, label=below right:{$b=(0,0,2)$}] (b) at (0,2) {};
  \node[Apex2, label=below right:{$a=(0,1,0)$}] (a) at (1,0) {};
  \node[Apex2, label={[fill=white]left:{$c=(0,-3,\infty)$}}] (c) at (-3,5.5) {};

  \node[ConeLabel, text width = 2.5cm] at (-2,-3.3) {$\complementarymonomial{f,g,h} = -\monomial{-a,-b,-c}$};

  \node[InfGen, label=below right:{$(-\infty,-\infty,0)$}] (emax2) at (0, \inflen) {};
  \node[InfGen, label={[fill=white]below:{$(-\infty,0,-\infty)$}}] (emax1) at (\inflen,0) {}; 
  \node[InfGen, label=below right:{$(\infty,0,\infty)$}] (emin1) at (-\inflen,0) {};
  \node[InfGen, label=below left:{$(\infty,\infty,0)$}] (emin2) at (0, -\inflen) {};

  \draw[Border] (c) -- (h) -- (b) -- (g) -- (a) -- (f);
  
  \node [label={above right:{$\TTmin^3$}}] at (-\inflen,-\inflen) {};
  \node [label={below left:{$\TTmax^3$}}] at (\inflen,\inflen) {};
  
\end{tikzpicture}

  \caption{Complementary pair of monomial tropical cones}
  \label{fig:complementary-cones}
\end{figure}
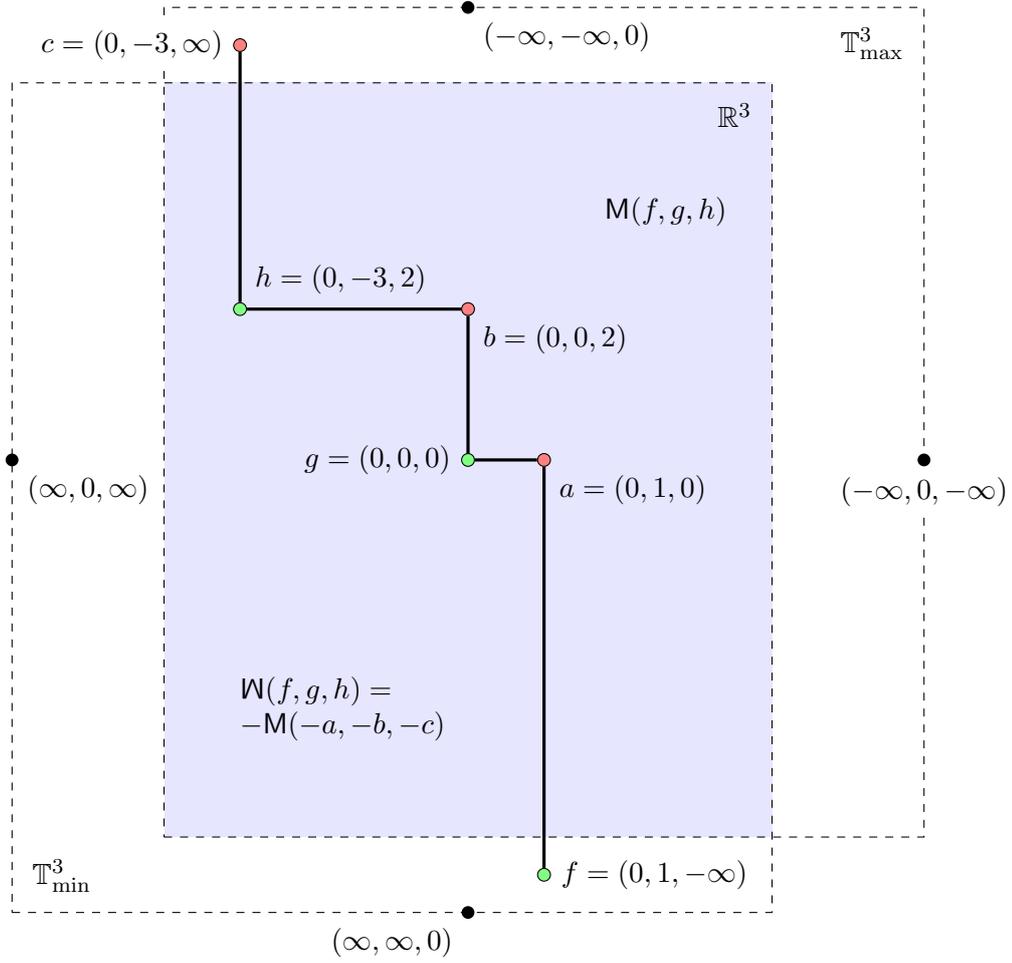

The following result extends the structural insight of \cite[Theorem 4]{AllamigeonGaubertKatz:2011}.

\begin{theorem} \label{thm:complementary-cones}
  The set $\closedcomplementarymonomial{G}$ is a \tmi-tropical cone.
  More precisely, if $\cH$ is a set of \tma-tropical halfspaces such that $\bigcap \cH=\closedmonomial{G}$, then
  \[
  \complementarymonomial{G} \ = \ - \monomial{-A} \enspace ,
  \]
  where $A \subset \TTmin^{d+1}$ is the set of apices of the tropical halfspaces in $\cH$.
 In particular, the set $A\cup\minunit$ generates $\closedcomplementarymonomial{G}$.
\end{theorem}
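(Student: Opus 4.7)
The plan is to explicitly compute the closure of $\RR^{d+1}\setminus\monomial{G}$ via the exterior description of $\monomial{G}$ given by Lemma~\ref{lem:monomial-halfspace}, and then to recognise it, through the sign isomorphism $x\mapsto -x$, as $-\monomial{-A}$. By Lemma~\ref{lem:monomial-halfspace}, each $H\in\cH$ has type $(\{0\},J_a)$ with apex $a\in\TTmin^{d+1}$ satisfying $\supp(a)=\{0\}\sqcup J_a$, and its intersection with $\RR^{d+1}$ is $H_a = \SetOf{x\in\RR^{d+1}}{x_0-a_0\leq\max(x_j-a_j\mid j\in J_a)}$. A direct substitution $y=-x$ together with~\eqref{eq:minmax} shows that the closure in $\RR^{d+1}$ of $\RR^{d+1}\setminus H_a$ is
\[
H_a^c \ = \ \SetOf{x\in\RR^{d+1}}{x_0-a_0\geq\max(x_j-a_j\mid j\in J_a)} \ = \ -\monomial{-a} \enspace ,
\]
where $-a\in\TTmax^{d+1}$ is a legal generator since $\supp(-a)=\{0\}\sqcup J_a$. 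Taking unions over $a\in A$ yields $\bigcup_{a\in A}H_a^c = -\monomial{-A}$.

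Next I identify this union with $\complementarymonomial{G}$. The inclusion $-\monomial{-A}\subseteq\complementarymonomial{G}$ is immediate: each $H_a^c$ is the closure of the open set on which $x_0-a_0>\max(x_j-a_j\mid j\in J_a)$, whose points lie outside $H_a$ and therefore outside $\monomial{G}$, and $\complementarymonomial{G}$ is already closed. For the reverse inclusion I invoke Theorem~\ref{thm:cones} to extract a \emph{finite} subcollection $\cH_0\subseteq\cH$ still cutting out $\monomial{G}$; then, since closure commutes with finite unions, writing $A_0\subseteq A$ for the apex set of $\cH_0$ gives
\[
\complementarymonomial{G} \ = \ \overline{\bigcup_{a\in A_0}(\RR^{d+1}\setminus H_a)} \ = \ \bigcup_{a\in A_0}H_a^c \ \subseteq \ \bigcup_{a\in A}H_a^c \enspace .
\]
Combining both inclusions gives $\complementarymonomial{G}=-\monomial{-A}$, and in particular $\complementarymonomial{G}$ is a $\min$-tropical cone in $\RR^{d+1}$ because $\monomial{-A}$ is a $\max$-tropical one there.

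The generation claim then follows from Lemma~\ref{lem:monomial-maxcone}: $\closedmonomial{-A}$ is the $\max$-tropical cone generated by $(-A)\cup\maxunit$, and applying the semiring isomorphism $x\mapsto -x$ from $\TTmax$ to $\TTmin$ translates this into the statement that $A\cup\minunit$ generates the $\min$-tropical cone $-\closedmonomial{-A}$, whose restriction to $\RR^{d+1}$ is precisely $\complementarymonomial{G}$. The main subtlety I expect to have to handle carefully is the case of an infinite $\cH$: topological closure need not commute with infinite unions, so the passage through a finite subcollection via Theorem~\ref{thm:cones} is essential, and only afterwards can the full apex set $A$ be re-introduced, thanks to the one-sided inclusion provided by the interior argument.
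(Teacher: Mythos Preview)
Your argument is correct and follows essentially the same route as the paper: pass from the exterior description $\monomial{G}=\bigcap_{a\in A}H_a$ to the union of the closed complementary halfspaces, then identify that union with $-\monomial{-A}$ via the sign isomorphism and invoke Lemma~\ref{lem:monomial-maxcone} for the generation statement. The one noteworthy difference is that you are more explicit about the finiteness issue: the paper simply asserts that closure commutes with the union ``as $G$ is finite'', whereas you spell out the reduction to a finite subcollection $\cH_0$ via Theorem~\ref{thm:cones} and then re-absorb the full apex set $A$ through the one-sided inclusion; this makes the argument cleaner when $\cH$ is not assumed finite from the outset.
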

\begin{proof}
  An application of De Morgan's Law shows that the complement in $\RR^{d+1}$ of the real part $\monomial{G}$ is the interior of a \tmi-tropical cone
  \begin{equation} \label{eq:exterior-complementary}
    \bigcap_{g \in G} \SetOf{x\in\RR^{d+1}}{x_0-g_0 > \min(x_j-g_j \mid j\in \supp(g)\setminus\{0\})} \enspace .
  \end{equation}
  It follows that the closure $\closedcomplementarymonomial{G}$ in $\TTmin^{d+1}$ is a closed \tmi-tropical cone.
  For each $a\in A$, by Lemma~\ref{lem:monomial-halfspace}, the corresponding \tma-tropical halfspace is of type $(\{0\},J)$, and it looks like \eqref{eq:monomial-halfspace}.
  Since
  \begin{equation} \label{eq:real+mono+cone+intersection+halfspaces}
  \monomial{G} \ = \ \bigcap_{a\in A} \SetOf{x\in\RR^{d+1}}{x_0-a_0 \leq \max(x_j-a_j \mid j\in \supp(a)\setminus\{0\})}
  \end{equation}
  we get
  \begin{equation} \label{eq:real+mono+cone+union+orthants}
  \complementarymonomial{G} \ = \  \bigcup_{a \in A} \SetOf{x\in\RR^{d+1}}{x_0-a_0 \geq \max(x_j-a_j \mid j\in \supp(a)\setminus\{0\})} \enspace .
  \end{equation}

  To obtain~\eqref{eq:real+mono+cone+union+orthants} from~\eqref{eq:real+mono+cone+intersection+halfspaces}, we take the closure of the complement in $\RR^{d+1}$.
  Here, the finiteness of $A$ allows us to take the closure of each tropical halfspace with apex in $A$ separately.
  Now, in view of the equality \eqref{eq:minmax}, we have that $x_0-a_0 \geq \max(x_j-a_j \mid j\in \supp(a)\setminus\{0\}) = -\min(-x_j+a_j \mid j\in \supp(a)\setminus\{0\})$, and, equivalently,
  \begin{equation} \label{eq:reversed-inequality}
    -x_0 + a_0 \ \leq \ \min(-x_j+a_j \mid j\in \supp(a)\setminus\{0\}) \enspace .
  \end{equation}
  By applying Lemma~\ref{lem:monomial-maxcone} to $\closedmonomial{-A}$ we see that $\closedmonomial{-A}$ is the \tma-tropical cone in $\TTmax^{d+1}$ generated by $-A \cup \maxunit$.
  Combining this observation with \eqref{eq:reversed-inequality} it follows that the real part of the \tmi-tropical cone $\complementarymonomial{G}$ equals $-\monomial{-A}$.
  Hence, the \tmi-tropical cone $\closedcomplementarymonomial{G}$ is generated by $A \cup \minunit$.
\end{proof}
We call $\closedcomplementarymonomial{G}$ the \emph{complementary monomial $min$-tropical cone} of the monomial \tma-tropical cone $\closedmonomial{G}$, and denote by $\complementarymonomial{G}$ its \emph{real part}. 

\begin{corollary}\label{cor:complement}
  The \tmi-tropical halfspaces with apices in the set $G$ yield an exterior description of the \tmi-tropical cone $\closedcomplementarymonomial{G}$.
  Its set $A$ of extremal generators satisfies
  \[
  \RR^{d+1} \setminus \monomial{G} \ =\ \bigcup_{a \in A} \SetOf{x\in\RR^{d+1}}{x_0-a_0 > \max(x_j-a_j \mid j\in \supp(a)\setminus\{0\})} \enspace .
  \]
\end{corollary}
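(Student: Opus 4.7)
The plan is to deduce both assertions from Theorem~\ref{thm:complementary-cones}, Lemma~\ref{lem:interior-union-sectors}, and a short topological argument. For the first part, I start from the definition $\monomial{G} = \bigcup_{g\in G}\{x : x_0 - g_0 \leq \min_{j\in\supp(g)\setminus\{0\}}(x_j - g_j)\}$. De Morgan turns its complement in $\RR^{d+1}$ into the open intersection \eqref{eq:exterior-complementary}, and I would then claim
\[
\complementarymonomial{G} \ = \ \bigcap_{g\in G}\bigl\{x\in\RR^{d+1} : \min_{j\in\supp(g)\setminus\{0\}}(x_j - g_j) \leq x_0 - g_0\bigr\}.
\]
The inclusion ``$\subseteq$'' holds because the right-hand side is closed and contains the open cone \eqref{eq:exterior-complementary}. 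For ``$\supseteq$'', I would perturb any $x$ in the intersection by $\epsilon(1,0,\dots,0)$: for $\epsilon > 0$ every reversed inequality becomes strict, so the perturbed point lies in $\RR^{d+1}\setminus\monomial{G}$, and letting $\epsilon\to 0$ places $x$ in $\complementarymonomial{G}$. Each defining halfspace is a min-tropical halfspace of type $(\supp(g)\setminus\{0\},\{0\})$ with apex $g$, giving the first assertion.

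For the second part, Theorem~\ref{thm:complementary-cones} provides $\complementarymonomial{G} = -\monomial{-A}$ with $-A\subseteq\TTmax^{d+1}$. Applying Lemma~\ref{lem:interior-union-sectors} to $\monomial{-A}$ identifies its interior as $\bigcup_{a\in A}\{x : x_0 + a_0 < \min_j(x_j+a_j)\}$, and substituting $y = -x$ yields
\[
\topint(\complementarymonomial{G}) \ = \ \bigcup_{a\in A}\bigl\{y\in\RR^{d+1} : y_0 - a_0 > \max_{j\in\supp(a)\setminus\{0\}}(y_j - a_j)\bigr\}.
\]
It then remains to show $\topint(\complementarymonomial{G}) = \RR^{d+1}\setminus\monomial{G}$. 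The inclusion ``$\supseteq$'' is immediate since the complement is open and contained in $\complementarymonomial{G}$. For ``$\subseteq$'', any $x\in\complementarymonomial{G}\cap\monomial{G}$ must lie on $\partial\monomial{g}$ for some $g\in G$; since $\monomial{g}$ is a full-dimensional ordinary convex polyhedron (defined by $x_j - x_0 \geq g_j - g_0$ for $j\in\supp(g)\setminus\{0\}$), its boundary points are accumulation points of $\topint(\monomial{g})\subseteq\topint(\monomial{G})$, and interior points of $\monomial{G}$ are disjoint from $\complementarymonomial{G}$, so $x\notin\topint(\complementarymonomial{G})$.

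The delicate step is precisely this last topological identification: in general the interior of the closure of an open set may strictly contain the set itself, and one needs the full-dimensionality of each halfspace $\monomial{g}$---a special feature of monomial tropical cones---to exclude this phenomenon. The rest of the argument is formal manipulation of De Morgan together with the min-max duality already exploited in Theorem~\ref{thm:complementary-cones}.
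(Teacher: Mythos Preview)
Your argument is correct and follows the same route as the paper's two-line proof: the first claim comes from closing \eqref{eq:exterior-complementary}, and the second from combining Theorem~\ref{thm:complementary-cones} with Lemma~\ref{lem:interior-union-sectors}. You have simply filled in the two topological steps the paper leaves implicit---the perturbation by $\epsilon(1,0,\dots,0)$ showing that the closure of the open intersection \eqref{eq:exterior-complementary} is the corresponding closed intersection, and the full-dimensionality argument identifying $\topint(\complementarymonomial{G})$ with $\RR^{d+1}\setminus\monomial{G}$---and both are handled cleanly.
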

\begin{proof}
  The first claim follows by taking the closure in Equation~\ref{eq:exterior-complementary}. The second claim follows from $\complementarymonomial{G} = - \monomial{-A}$ with Lemma~\ref{lem:interior-union-sectors}.
\end{proof}

\begin{remark}
  A set $Y\in\RR^d$ is `$\RR^d_{\geq 0}$-convex' according to \cite[Definition 3.1]{Ehrgott:2005} if $Y+\RR^d_{\geq 0}$ is convex in the ordinary sense.
  Each ordinary convex set is `$\RR^d_{\geq 0}$-convex', but the converse is false.
  There is no direct relationship with tropical convexity. 
  The latter is just the \tma-tropical analogue of a convex combination.
  For instance, the set
  \[
  \bigl\{ (1,0),\, (0,1) \bigr\} + \RR^2_{\geq 0}
  \]
  is \tma-tropically convex but it is not `$\RR^2_{\geq 0}$-convex'.
  Conversely, the unit disk in $\RR^2$ is convex in the ordinary sense, and thus `$\RR^2_{\geq 0}$-convex', but it is not \tma-tropically convex.
\end{remark}

The remainder of this section is devoted to describing the various algorithmic contributions from tropical convexity to our Algorithm~\ref{algo:non-dominated-set} (given in the section below) for computing the nondominated set of a discrete multicriteria optimization problem.
We believe that these observations are also of independent interest.
The first result in this direction exhibits a dichotomy which is similar in spirit to the Farkas Lemma of linear programming.

\begin{lemma}\label{lem:farkas}
  Suppose that $G\subset\TTmax^{d+1}$ and $G'\subset G$.
  Let $a \in \RR^{d+1}$ be an extremal generator of $\closedcomplementarymonomial{G'}$.
  Then either $a$ is an extremal generator of $\closedcomplementarymonomial{G}$ or there is a point $g \in G$ such that $g \in a - \left(\bigl(\{0\} \times \RR_{> 0}^d\bigr) + \RR\1\right)$.
\end{lemma}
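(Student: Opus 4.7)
The plan is a case distinction on whether the point $a$ lies in $\complementarymonomial{G}$.  A preliminary unwinding: after tropical scaling so that $a_0 = g_0 = 0$, the condition $g \in a - \bigl((\{0\}\times\RR_{\geq 0}^d)+\RR\1\bigr) = a-\monomial{0}$ amounts to $a_j \geq g_j$ for all $j \in \{1,\dots,d\}$, which is exactly the inequality defining $a \in \monomial{g}$. Hence the second alternative of the lemma is equivalent to the assertion that $a \in \monomial{g}$ for some $g \in G$.

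\emph{Case 1: $a \in \complementarymonomial{G}$.}  I claim that $a$ is then already an extremal generator of the smaller cone. Suppose for contradiction that $a = \min_i(\lambda_i + v^{(i)})$ with $v^{(i)} \in \complementarymonomial{G}$ and no $v^{(i)} \in a + \RR\1$. The inclusion $G' \subseteq G$ forces $\monomial{G'} \subseteq \monomial{G}$ and hence $\complementarymonomial{G} \subseteq \complementarymonomial{G'}$, so every $v^{(i)}$ lies in $\complementarymonomial{G'}$ as well; this representation contradicts the extremality of $a$ in the larger cone. The same argument, applied to the closures $\closedcomplementarymonomial{G} \subseteq \closedcomplementarymonomial{G'}$, handles the case when $a$ has coordinates equal to $\infty$, as happens for the unit vectors in $\minunit$.

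\emph{Case 2: $a \notin \complementarymonomial{G}$.}  The set $\monomial{G}$ is a closed subset of $\RR^{d+1}$, being a finite union of the closed monomial cones of Example~\ref{exmp:sector-as-max}, while $\complementarymonomial{G}$ is by construction the closure of $\RR^{d+1}\setminus\monomial{G}$. Consequently $\complementarymonomial{G}\cup\topint\monomial{G}=\RR^{d+1}$, which forces $a \in \topint\monomial{G}$. Lemma~\ref{lem:interior-union-sectors} then produces some $g \in G$ with $a$ in the interior of $\monomial{g}$; in particular $a \in \monomial{g}$, and the second alternative of the lemma follows from the preliminary equivalence. The part I expect to require the most care is the bookkeeping needed to treat infinity-valued coordinates of $a$ and $g$ consistently across $\TTmin^{d+1}$ and $\TTmax^{d+1}$, but once the conventions are fixed the argument is short.
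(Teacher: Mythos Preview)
Your argument is correct and follows essentially the same route as the paper: the paper invokes Corollary~\ref{cor:complement} to assert that $a$ fails to be an extremal generator of $\complementarymonomial{G}$ precisely when some $g\in G$ satisfies $a_0-g_0<\min_j(a_j-g_j)$, then identifies this with the condition $g\in a-\monomial{0}$ via \eqref{eq:orthant}. Your explicit case split and your justification of Case~1 via the inclusion $\complementarymonomial{G}\subseteq\complementarymonomial{G'}$ spell out a step the paper leaves implicit in its ``if and only if''; otherwise the two proofs coincide.
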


\begin{proof}
  An exterior description of $\closedcomplementarymonomial{G}$ is given in Corollary~\ref{cor:complement}.
  The point $a$ is an extremal generator of $\closedcomplementarymonomial{G}$ if and only if it does not separate a generator of $\closedcomplementarymonomial{G}$.
  This means that there is no $g \in G$ such that
  \[
  a \in \SetOf{x\in\RR^{d+1}}{x_0 - g_0 < \min(x_j-g_j \mid j\in [d])} \enspace .
  \]
  With \eqref{eq:orthant} this implies the claim.
\end{proof}

For exploiting Lemma~\ref{lem:farkas} in our Algorithm~\ref{algo:non-dominated-set} we need a method to filter out the extremal generators of those tropical cones that arise in our procedure.

To embed it in the larger context, we recall refinements of the classical double description method from Fukuda and Prodon in~\cite{FukudaProdon:1996}. Their Lemma~3 describes the general method for computing a set containing all generators of the updated cone (including many redundant ones). The tropical analog for monomial tropical cones is described in Lines~\ref{line:pairs-in-out} to~\ref{line:min} of Algorithm~\ref{algo:newextremals}. Fukuda and Prodon describe the computation of only non-redundant generators in~\cite[Lemma~8]{FukudaProdon:1996}, which involves the characterization of adjacent pairs in advance based on~\cite[Proposition~7]{FukudaProdon:1996}.
The adjacency structure developed in~\cite{DaechertEtAl} is very close to the classical adjacency described in~\cite{FukudaProdon:1996}. This is a strengthened approach to the redundancy avoidance in~\cite{KLV2015}.
An earlier filtering method is redundancy elimination. This technique was already used in~\cite{PrzybylskiGandibleuxEhrgott:2010}. An improved version is presented in~\cite{KLV2015}. Additionally, Line~6 of Algorithm~2 in \cite{KLV2015} shows that, for monomial tropical cones, not even all the pairs from $A^{\geq} \times (A \setminus A^{\geq})$ are required but the pairs in $\minunit \times A \setminus A^{\geq}$ are enough. 
As remarked in \cite[Corollary 8]{AllamigeonGaubertKatz:2011}, alternatively, an adaptation of \cite{BEGKM:2002} yields an incremental quasi-polynomial time algorithm based on a generalization of the minimal hypergraph transversal generation.


Our filtering is not tailored to be as efficient but provides the geometric intuition from tropical convexity. We offer two methods in Lemma~\ref{lem:check-extremal} and Lemma~\ref{lem:inner-extremal} which are meant to fill in for Lines~\ref{line:check-extremal} --~\ref{line:nu} in the algorithm.
They are specializations of known more general methods, which are adapted to tropical cones which are monomial; cf.\ \cite[Proposition 2.4]{DaechertEtAl}.

\begin{lemma}\label{lem:check-extremal}
  The point $a \in \RR \times \TTmin^d$ is an extremal generator of $\closedcomplementarymonomial{G}$ if and only if, for every $j \in \supp(a)\setminus\{0\}$, there is an apex $g \in G$ such that
  \begin{equation} \label{eq:extremal-index}
  a_0 - g_0 = \min(a_{\ell} - g_{\ell} \mid \ell\in\supp(a)\setminus\{0\})  = a_j - g_j < a_{\ell} - g_{\ell} \enspace ,
  \end{equation}
 for each $\ell \in \supp(a)\setminus\{0,j\}$.
\end{lemma}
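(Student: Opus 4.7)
The plan is to prove both directions using the exterior description of $\complementarymonomial{G}$ furnished by Corollary~\ref{cor:complement}: the cone is cut out by the closed tropical halfspaces
\[
H_g \ = \ \SetOf{x\in\RR^{d+1}}{x_0 - g_0 \geq \min_{\ell\in J_g}(x_\ell-g_\ell)}
\]
for $g\in G$, where $J_g=\supp(g)\setminus\{0\}$. Note first that $\min_{\ell\in J_g}(a_\ell-g_\ell)$ and $\min_{\ell\in\supp(a)\setminus\{0\}}(a_\ell-g_\ell)$ coincide, since indices outside $J_g\cap\supp(a)\setminus\{0\}$ contribute $+\infty$ to both sides. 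The condition of the lemma therefore says that for every $j\in\supp(a)\setminus\{0\}$ there is some $g\in G$ for which $H_g$ is tight at $a$ and the minimum on its right-hand side is attained uniquely at the index $j$ among $\supp(a)\setminus\{0\}$.

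For ($\Rightarrow$) I would argue by contraposition: suppose the condition fails at some $j^*\in\supp(a)\setminus\{0\}$, and construct a nontrivial decomposition of $a$. Let $a^+$ be obtained from $a$ by replacing $a_{j^*}$ by $a_{j^*}+\epsilon$ for a small $\epsilon>0$. The failure of the condition means that for every $g\in G$ at least one of three situations arises at $a$: either $H_g$ is strict, or the minimum in $H_g$ is attained at an index $\neq j^*$, or there is a tie between $j^*$ and some other index. A case-by-case check of each of these possibilities shows that $a^+$ still lies in $H_g$ provided $\epsilon$ is small enough; since $G$ is finite, a single $\epsilon>0$ works for all $g\in G$ simultaneously, and so $a^+\in\complementarymonomial{G}$. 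Now the identity $a=\min(a^+,\, a_{j^*}\cdot e^{(j^*)})$ holds coordinatewise, the second summand being a tropical rescaling of the generator $e^{(j^*)}\in\minunit$ and hence also in $\complementarymonomial{G}$. Since $0\in\supp(a)$, neither $a^+$ nor $a_{j^*}\cdot e^{(j^*)}$ is a tropical scaling of $a$, so $a$ is not an extremal generator.

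For ($\Leftarrow$) I would assume the condition and take any representation $a=\min(b,c)$ with $b,c\in\complementarymonomial{G}$, aiming to conclude that $b$ or $c$ equals $a$ up to tropical scaling. Coordinatewise $b,c\geq a$ with at least one of $b_i,c_i$ equal to $a_i$ in every coordinate, so without loss of generality $b_0=a_0$; it then suffices to show $b_j=a_j$ for every $j\in\supp(a)\setminus\{0\}$, as the coordinates outside $\supp(a)$ are forced to be $+\infty$ anyway. Suppose for contradiction that $b_j>a_j$ for some such $j$, and invoke the witness $g=g^{(j)}$ guaranteed by the hypothesis. Combining $b\in H_g$, $b_0=a_0$, and the identity $a_0-g_0=a_j-g_j$, there exists $\ell^*\in J_g$ with $b_{\ell^*}-g_{\ell^*}\leq a_j-g_j$. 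If $\ell^*=j$ this gives $b_j\leq a_j$, contradicting $b_j>a_j$; otherwise combining $b_{\ell^*}\geq a_{\ell^*}$ with the upper bound yields $a_{\ell^*}-g_{\ell^*}\leq a_j-g_j$, which directly contradicts the strict inequality in the condition when $\ell^*\in\supp(a)\setminus\{0,j\}$, and forces $+\infty\leq a_j-g_j$ (a finite number) when $\ell^*\notin\supp(a)$.

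The main obstacle is the careful bookkeeping in both case analyses, especially tracking indices that lie outside $\supp(a)$ or $\supp(g)$ where $\pm\infty$ values appear, and distinguishing the sub-cases in which the right-hand minimum of $H_g$ is strictly at $j^*$, strictly away from $j^*$, or tied. Once these cases are neatly separated, each half of the proof reduces to a short manipulation of the defining inequalities.
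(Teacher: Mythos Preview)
Your argument is correct and self-contained. The paper, by contrast, does not prove this lemma directly: it simply observes that monomial tropical cones are the ``$i$th polar cones'' of Allamigeon, Gaubert and Katz and cites their Theorem~3. Your approach---using the exterior description from Corollary~\ref{cor:complement}, perturbing the $j^*$-coordinate to produce a nontrivial decomposition in one direction, and using the witness $g^{(j)}$ to force $b_j=a_j$ in the other---is essentially the specialization of that external result to the monomial case, carried out explicitly. This has the advantage of keeping the development within the paper and of making transparent why the ``unique minimizer at $j$'' condition is exactly what is needed. One small point worth stating up front: you tacitly assume $a\in\closedcomplementarymonomial{G}$ with $0\in\supp(a)$, which is the setting in which the lemma is invoked in Algorithm~\ref{algo:newextremals}; the trivial generators in $\minunit$ are handled separately.
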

\begin{proof}
  Monomial tropical cones occur as `$i$th polar tropical cones' in \cite{AllamigeonGaubertKatz:2011}.
  Our claim follows from their Theorem~3.
\end{proof}

\begin{proposition}\label{prop:newextremals}
  For $G\subset\TTmax^{d+1}$ and $h\in\TTmax^{d+1}$ with $h_0=0$ the Algorithm~\ref{algo:newextremals} correctly returns the extremal generators of $\closedcomplementarymonomial{G \cup \{h\}}$.
\end{proposition}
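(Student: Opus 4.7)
The plan is to verify soundness and completeness of Algorithm~\ref{algo:newextremals} separately, relying on the combinatorial characterization of extremality in monomial tropical cones (Lemma~\ref{lem:check-extremal}) together with the Farkas-type dichotomy of Lemma~\ref{lem:farkas}. Structurally, the algorithm acts as one update step in a tropical double description scheme: it processes the extremal generators $A$ of $\complementarymonomial{G}$ with respect to the single additional halfspace introduced by $h$, and (i) retains those $a \in A$ that survive, (ii) replaces the remaining ones by suitable tropical combinations with $h$, (iii) applies an extremality filter at the end.

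For soundness, suppose the algorithm outputs a point $a$. Using Corollary~\ref{cor:complement}, I would verify $a \in \complementarymonomial{G \cup \{h\}}$ by checking $a_0 - b_0 \leq \max(a_j - b_j \mid j\in\supp(b)\setminus\{0\})$ for every $b \in G \cup \{h\}$; this is automatic for $b \in G$ from $a$'s provenance as a (possibly modified) old extremal, and it is built into the construction in the case $b = h$. Extremality then follows directly from Lemma~\ref{lem:check-extremal}, since the algorithm must provide, for every $j \in \supp(a) \setminus \{0\}$, an apex in $G \cup \{h\}$ witnessing the unique-minimum condition~\eqref{eq:extremal-index}.

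For completeness, let $a'$ be an extremal generator of $\complementarymonomial{G \cup \{h\}}$. If $a'$ is already extremal in $\complementarymonomial{G}$, then it is one of the inputs, and its witnesses under Lemma~\ref{lem:check-extremal} for $\complementarymonomial{G \cup \{h\}}$ are a fortiori witnesses for $\complementarymonomial{G}$, so it survives the filter. Otherwise, Lemma~\ref{lem:farkas} (applied to $G' = G$ inside the larger set $G \cup \{h\}$) together with Lemma~\ref{lem:monomial-halfspace} forces $a'$ to lie on the boundary of the halfspace coming from $h$, with the minimum in~\eqref{eq:extremal-index} for $b = h$ uniquely attained at some $j \in \supp(h)\setminus\{0\}$. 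Hence $a'$ can be obtained from an element of $A$ by raising its $j$-coordinate to $a'_0 + h_j$, and the algorithm enumerates exactly such combinations for each $j$.

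The main obstacle is this last step: to argue that each genuinely new extremal $a'$ arises from only a \emph{single}-coordinate modification of an old extremal, rather than from some more complicated tropical combination as in the general double description method. This is where the monomial structure is indispensable: Lemma~\ref{lem:monomial-halfspace} pins the added halfspace to type $(\{0\},J)$, so the new boundary touches an old extremal in precisely one coordinate, and the combinatorial rigidity provided by Lemma~\ref{lem:check-extremal} confines the search to a finite explicit list of candidates that the algorithm can traverse.
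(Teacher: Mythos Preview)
Your outline does not match what Algorithm~\ref{algo:newextremals} actually does, and the completeness argument breaks down as a result.

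First, the algorithm never ``combines with $h$'' or performs single-coordinate raises.  It iterates over \emph{pairs} $(b,c)$ with $b\in A^{\geq}$ and $c\in A\setminus A^{\geq}$ and forms the \tmi-tropical combination $a=\min(\lambda\1+b,c)$, where $\lambda$ is chosen so that $\lambda\1+b$ sits on the boundary of the new halfspace.  This is precisely one update step of the tropical double description method of \cite{DoubleDescription:2010}; the candidates you describe (``raise the $j$-coordinate of some element of $A$ to $a'_0+h_j$'') are a different list, and you give no argument that the two lists coincide.  Consequently, your claim that ``the algorithm enumerates exactly such combinations for each $j$'' is unsupported.

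Second, your appeal to Lemma~\ref{lem:farkas} is in the wrong direction.  That lemma takes $G'\subset G$ and an extremal of $\complementarymonomial{G'}$ and tells you whether it survives in $\complementarymonomial{G}$.  In your completeness step you start from an extremal $a'$ of the \emph{smaller} cone $\complementarymonomial{G\cup\{h\}}$ that is \emph{not} extremal in $\complementarymonomial{G}$; Lemma~\ref{lem:farkas} says nothing about such $a'$.  What is actually needed here is the double description fact (Theorem~4.1 in \cite{DoubleDescription:2010}) that any new extremal of a tropical cone cut by one additional halfspace lies on the boundary of that halfspace and arises as a tropical combination of one old extremal inside and one outside.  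That is exactly what the paper invokes, and it is not recoverable from Lemmas~\ref{lem:farkas} and~\ref{lem:monomial-halfspace} alone.

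In short, the soundness half of your sketch is essentially fine (elements of $A^{\geq}$ remain extremal by Lemma~\ref{lem:check-extremal}, and the explicit extremality test handles the rest), but completeness requires the pairwise-combination argument from the tropical double description method rather than the single-coordinate mechanism you propose.
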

\begin{proof}
  The method is an adaptation of the procedure \textproc{ComputeExtreme} in \cite{DoubleDescription:2010} to monomial tropical cones.
  The restriction to a special class of tropical cones allows us to simplify the expressions from the Algorithm in~\cite[Figure 9]{DoubleDescription:2010}.
  There is no need to calculate with proper two-sided inequalities as one side is just a single term.
  Apart from that, the algorithmic structure is just the same.
  Hence, its correctness follows directly from \cite[Theorem 4.1]{DoubleDescription:2010}.
  Note that a possible extremality test in line \ref{line:check-extremal} is given by Lemma~\ref{lem:check-extremal}.
\end{proof}

\begin{algorithm}[htbp]
\caption{Extremal generators of a monomial tropical cone}  \label{algo:newextremals}
  \begin{algorithmic}[1]
    \Require{A set $G\subset\TTmax^{d+1}$, the set $A$ of extremal generators of $\closedcomplementarymonomial{G}$, and a point $h\in\TTmax^{d+1}$ with $h_0=0$.}
    \Ensure{The set of extremal generators of $\closedcomplementarymonomial{G \cup \{h\}}$.}
    \Procedure{NewExtremals}{$G,A,h$}
    \State $A^{\geq} \gets \SetOf{a \in A}{a_0 \geq \min_{i \in [d]}(a_i - h_i)}$ 
    \State $B \gets A^{\geq}$
    \For{all pairs of $b \in A^{\geq}$ and $c \in  A \setminus A^{\geq}$ } \label{line:pairs-in-out}
    \State $\lambda \gets -\min_{i \in [d]}(b_i - h_i)$
    \State $a \gets \min(\lambda\1_{d+1} + b , c)$ \label{line:min} 
    \If{$a$ extremal in $\closedcomplementarymonomial{G \cup \{h\}}$} \label{line:check-extremal}
    \State $B \gets B \cup \{\nu(a)\}$ \label{line:nu}
    \EndIf
    \EndFor
    \State \Return $B$ 
    \EndProcedure
  \end{algorithmic}
\end{algorithm}

Note that the operation `$\min$' in line \ref{line:min} of Algorithm~\ref{algo:newextremals} is component-wise.
That is, it takes two points $x,y\in\TTmin^d$ as arguments and returns the point $(\min(x_1,y_1),\dots,\min(x_d,y_d))$.
The operation $\nu(a)$ in line \ref{line:nu} picks the unique representative $(0,a_1-a_0,\dots,a_d-a_0)$ in $\RR\1+a$ with leading coefficient zero.
Observe that $a_0<\infty$ since $c_0<\infty$.

Next we give an alternative to Lemma~\ref{lem:check-extremal} and thus for line \ref{line:check-extremal} of Algorithm~\ref{algo:newextremals}.

\begin{lemma}\label{lem:inner-extremal}
  Let $A \subseteq \RR^{d+1}$ be a set of finite generators of a \tmi-tropical monomial cone.
  A generator $b \in A$ is extremal if and only if there is no $a \in A \setminus \{b\}$ with
  \[
  a_0 - b_0 \ \leq \ \min(a_1 - b_1, \ldots, a_d - b_d) \enspace .
  \]
\end{lemma}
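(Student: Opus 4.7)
The plan is to establish both directions of the equivalence via an explicit tropical decomposition, appealing to Theorem~\ref{thm:complementary-cones} so that the ambient generating set of the monomial \tmi-tropical cone is $A \cup \minunit$.

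First I would normalize by tropical scaling, which is permissible because every apex in $A$ has a finite $0$-th coordinate by construction (each apex comes from a tropical halfspace of type $(\{0\},J)$, cf.\ Lemma~\ref{lem:monomial-halfspace}). After choosing representatives with $a_0 = 0$ for every $a \in A$ and $b_0 = 0$, the stated condition $a_0 - b_0 \leq \min_{j \in [d]}(a_j - b_j)$ reduces to the componentwise inequality $a_j \geq b_j$ for all $j \in [d]$. The statement is to be read modulo tropical scaling, so ``no $a \in A$'' means ``no $a$ that is not a tropical scalar multiple of $b$''; otherwise the choice $a = b$ would trivialize the condition.

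For the direction showing that such an $a \in A \setminus \{b\}$ forces $b$ to be non-extremal, I would exhibit the decomposition
\[
b \;=\; \min\bigl(a,\; b_1 + e^{(1)},\; b_2 + e^{(2)},\; \ldots,\; b_d + e^{(d)}\bigr)
\]
and verify it coordinate by coordinate. At position $0$ the minimum equals $a_0 = 0 = b_0$, since each $e^{(k)}$ has $\infty$ there, and at each position $k \geq 1$ it equals $\min(a_k, b_k) = b_k$ because $a_k \geq b_k$ by hypothesis. This exhibits $b$ as a tropical combination of elements of $(A \setminus \{b\}) \cup \minunit$, so $b$ is not extremal.

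For the converse, if $b$ is not extremal then $b$ coincides with a nontrivial tropical min-combination of elements of $(A \setminus \{b\}) \cup \minunit$. Because every $e^{(k)}$ has $\infty$ in coordinate $0$, the value $b_0 = 0$ must be realized by the term coming from some $a \in A \setminus \{b\}$, which forces the tropical scalar attached to $a$ to equal $0$. Comparing entries in each coordinate $k \geq 1$ then gives $b_k \leq a_k$, which is precisely the asserted domination. The main obstacle is conceptual rather than computational: one must interpret the statement modulo tropical scaling and check that coordinates where $a_j = \infty$ cause no issue, since they automatically satisfy $a_j \geq b_j$.
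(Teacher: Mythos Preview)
Your argument is correct. Both directions go through as written: the explicit decomposition $b = \min\bigl(a,\; b_1 + e^{(1)},\; \ldots,\; b_d + e^{(d)}\bigr)$ works coordinate-wise, and in the converse the observation that only apices in $A$ (not the $e^{(k)}$) can realise the $0$th coordinate pins down an $a$ with $\lambda_a = 0$, whence $a_k \geq b_k$ for all $k$. Your caveat that the statement must be read modulo tropical scaling (so $a \neq b$) is exactly the right reading.

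The paper's proof is different in presentation rather than in substance: it simply cites \cite[Prop.~3.3.6]{Butkovic10}, the general extremality criterion for tropical cones, and remarks that Theorem~\ref{thm:complementary-cones} supplies the monomial structure needed to make the $0$th coordinate play its special role. Your proof is the explicit unpacking of that citation in this setting. What you gain is self-containment --- a reader need not chase the reference --- and the decomposition via $\minunit$ makes transparent \emph{why} the monomial hypothesis matters. What the paper's one-line proof buys is brevity and the signal that nothing beyond the standard tropical linear algebra toolkit is required. Both approaches lean on Theorem~\ref{thm:complementary-cones} in the same way, to ensure that the full generating set is $A \cup \minunit$ with every $a \in A$ satisfying $a_0 < \infty$.
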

\begin{proof}
  This is a special case of \cite[Prop.~3.3.6]{Butkovic10}. It can be applied as Theorem~\ref{thm:complementary-cones} transfers the role of the $0$th coordinate from the inequality description to the generators.
  The claim follows from Lemma~\ref{lem:monomial-maxcone}, as $-\closedmonomial{A}$ is just the union of the sets $-\closedmonomial{a}$ for $a \in A$. 
\end{proof}

The main difference to the above is that Lemma~\ref{lem:inner-extremal} does not make use of an exterior description of the tropical cone.
It directly translates into an algorithm of (unit cost) complexity $O(n^2d)$ for determining all extremal generators of a monomial tropical cone generated from $n$ points.
For practical applications of multicriteria optimization it may be useful to explore which of the two methods is superior in a given scenario.

\section{An Upper Bound Theorem}\label{sec:upper-bound}
\noindent
Now we will study the combinatorial complexity of general tropical cones.
In the subsequent section this will be used to recover known bounds on the number of scalarizations required for finding the nondominated points of a given multicriteria optimization problem.
We start out with relating tropical cones with ordinary convex cones defined over a suitably chosen ordered field.
A formal power series in $t$ of the form
\[
\gamma(t) \ = \ \sum_{u\in U} c_u t^u
\]
is a \emph{generalized real Puiseux series} if
\begin{inparaenum}[(i)]
\item the set $U$ of exponents is a countable and well-ordered subset of~$\RR$,
\item which is finite, or it has $\infty$ as its only accumulation point, and
\item the coefficients $c_u$ are real numbers.
\end{inparaenum}
The usual Puiseux series have rational exponents with a common denominator.
The \emph{valuation map} $\val$ sends a (generalized) Puiseux series to its lowest exponent.
Further, the real (generalized) Puiseux series are ordered: the sign is given by the sign of the coefficient of the term of lowest order.
It is a consequence of \cite[Theorem~1]{Markwig:2010} that the set $\hahnseries{\RR}{t}$ of generalized real Puiseux series, equipped with the coefficient-wise addition and the usual convolution product, forms a real closed field.
By the Tarski--Seidenberg principle \cite[Theorem 2.80]{BasuPollackRoy:2006} the first-order theories of the ordered fields $\hahnseries{\RR}{t}$ and $\RR$ coincide.
In particular, convexity, linear programming and polyhedra work like over the reals.

The connection to tropical convexity comes from the following observation \cite[\S2]{DevelinYu:2007}.
The valuation map $\val:\hahnseries{\RR}{t}\to\TTmin$ can be extended coordinatewise and pointwise to arbitrary subsets of the vector space ${\hahnseries{\RR}{t}}^{d+1}$.
As a key fact the restriction of $\val$ to the nonnegative elements of $\hahnseries{\RR}{t}$ is a surjective homomorphism of semirings onto $\TTmin$.
In this way, we see that for an ordinary cone $\bm C$ in ${\hahnseries{\RR}{t}}^{d+1}$ the image $\val(\bm C)$ is a \tmi-tropical cone in $\TTmin^{d+1}$.
Conversely, each \tmi-tropical cone arises in this way.

McMullen's upper bound theorem \cite{McMullen:1970} says that the maximal number of extremal generators of an ordinary polyhedral cone in $\RR^{k+1}$ with $m$ facets is bounded by
\begin{equation}\label{eq:upper-bound}
  U(m,k) \ = \ \binom{m - \lceil k/2 \rceil}{\lfloor k/2 \rfloor} + \binom{m-\lfloor k/2 \rfloor - 1}{\lceil k/2 \rceil - 1} \enspace ,
\end{equation}
which is the number of facets of a cyclic $k$-polytope with $m$ vertices.
A direct computation shows that $U(m,k)$ lies in $\Theta(m^{\lfloor k/2\rfloor})$ for $k$ fixed.
By expressing tropical cones as limits of classical cones, McMullen's upper bound theorem was used by Allamigeon, Gaubert and Katz to derive an upper bound theorem for tropical cones. 
Here we give a variation of their argument, which leads to a rather short proof.

\begin{theorem}[{Allamigeon, Gaubert and Katz \cite[Theorem 1]{AGK:ExtremePoints2011}}]\label{thm:upper-bound}
  The number of extreme rays of an arbitrary tropical cone in $\TTmin^{d+1}$ defined as the intersection of $n$ tropical halfspaces is bounded by $U(n+d,d)$.
\end{theorem}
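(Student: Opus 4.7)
The plan is to lift the tropical setup to classical convex geometry over the real closed field ${\hahnseries{\RR}{t}}$ of generalized real Puiseux series, and then invoke McMullen's upper bound theorem. The main ingredients are the tropicalization via $\val$, a lifting of extreme rays, and a careful bookkeeping of facets.

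First I would fix, for each of the $n$ tropical halfspaces
\[
H_k \ = \ \SetOf{x\in\TTmin^{d+1}}{\min(x_i+a^{(k)}_i \mid i\in I_k) \leq \min(x_j+a^{(k)}_j \mid j\in J_k)} ,
\]
the corresponding classical halfspace in the nonnegative orthant of ${\hahnseries{\RR}{t}}^{d+1}$,
\[
\bm H_k \ = \ \SetOf{\bm X \in {\hahnseries{\RR}{t}}^{d+1}_{\geq 0}}{\sum_{i\in I_k} t^{a^{(k)}_i} \bm X_i \leq \sum_{j\in J_k} t^{a^{(k)}_j} \bm X_j} ,
\]
and let $\bm C = \bigcap_{k=1}^n \bm H_k$. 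Because $\val$ restricts to a surjective semiring homomorphism from the nonnegative Puiseux series onto $\TTmin$, one verifies that $\val(\bm C) = C$ and that the image of any finite generating set of $\bm C$ generates $C$. In particular, every tropical extreme ray of $C$ is the valuation of some classical extreme ray of $\bm C$, so it suffices to bound the number of extreme rays of the classical polyhedral cone $\bm C$.

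Next, since $\bm C$ is a pointed cone in the nonneg orthant and invariant under scaling by ${\hahnseries{\RR}{t}}_{>0}$, slicing by an affine hyperplane transverse to the scaling direction produces a classical $d$-dimensional polytope $\bm P$ over ${\hahnseries{\RR}{t}}$ whose vertices are in bijection with the extreme rays of $\bm C$. With the slicing hyperplane chosen suitably, $\bm P$ is cut out by the $n$ lifted halfspaces together with at most $d$ of the coordinate halfspaces, giving an inequality description with at most $n+d$ facets. The dual form of McMullen's upper bound theorem then bounds the vertex count of $\bm P$ by $U(n+d,d)$, which transports back via $\val$ to the desired bound on the number of extreme rays of $C$.

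The main obstacle is the reduction of the facet count from $n+d+1$ to $n+d$, i.e., showing that one of the $d+1$ orthant constraints can be discarded when passing to $\bm P$. The natural route is to dehomogenize with a coordinate $\bm X_\ell = 1$ chosen adapted to $\bm C$, so that the orthant inequality $\bm X_\ell \geq 0$ becomes automatic on the slice rather than a facet of $\bm P$; equivalently, one exploits the tropical scaling invariance of $C$ to absorb one of the coordinate facets. Once this bookkeeping is done, the conclusion follows immediately from McMullen's classical theorem, which is the source of the rather short proof advertised above.
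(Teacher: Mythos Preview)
Your approach is essentially the paper's: lift to Puiseux series, intersect with nonnegativity constraints, and invoke McMullen. Two points, though.

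First, the inequality in your lifted halfspace is reversed. On nonnegative Puiseux series the valuation is order-\emph{reversing} (smaller lowest exponent means larger element), so the tropical condition $\min_{i\in I}(x_i+a_i)\leq\min_{j\in J}(x_j+a_j)$ lifts to $\sum_{i\in I} t^{a_i}\bm X_i \geq \sum_{j\in J} t^{a_j}\bm X_j$, not $\leq$. This is a bookkeeping slip and does not affect the structure of the argument.

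Second, the phrase ``one verifies that $\val(\bm C)=C$ and that the image of any finite generating set of $\bm C$ generates $C$'' hides the only genuinely nontrivial step, and it is exactly where the paper invokes \cite[Proposition~2.6]{ABGJ:2015}. The inclusion $\val(\bm C)\subseteq C$ is immediate, but for the reverse inclusion the naive monomial lift $\bm X_i=t^{x_i}$ can fail when a tropical inequality is satisfied with equality: both sides of the lifted inequality then have the same valuation and the sign depends on lower-order data. The cited result supplies a lift for which $\val(\bm C)=C$ holds and, crucially, the classical extreme rays map onto the tropical ones; that is the substance you are asserting without proof. Your dehomogenization by setting one coordinate to $1$ is exactly why the paper's lift carries only $d$ (rather than $d+1$) nonnegativity constraints, so on that point you and the paper agree.
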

\begin{proof}
  Let $C$ be a tropical cone given as the intersection of the tropical halfspaces $H_1, \ldots, H_n$.
  By \cite[Proposition 2.6]{ABGJ:2015}, there are halfspaces $\bm H_1, \ldots, \bm H_n$ in ${\hahnseries{\RR}{t}}^{d+1}$ with $\val(\bm H_j) = H_j$, for $j \in [n]$, such that
  \[
    \val\left( \, \bigcap_{j=1}^{n} \bm H_j \, \cap \, \bigcap_{i=1}^{d} \{\bm x_i \geq \bm 0\} \, \right) \ = \ \bigcap_{j=1}^{n} H_j \enspace ,
  \]
  and, additionally, the generators of the ordinary cone $\bm C=\bigcap \bm H_j$ are mapped onto the generators of the tropical cone $C$. 
  The ordinary cone $\bm C$ has at most $n+d$ facets, and thus the claim follows from McMullen's upper bound theorem.
\end{proof}

The fact that an additional summand of $d$ occurs in the first argument of the upper bound function $U(\cdot,\cdot)$ may be surprising at first sight.
Being able to go back and forth between cones over Puiseux series and tropical cones easily requires to restrict to nonnegative Puiseux series.
So the `$+d$' accounts for the nonnegativity constraints.

In \cite{KaplanEtAl:2008} Kaplan et al.\ study colored orthogonal range counting and implicitly give the asymptotic upper and lower bound for the special case of monomial tropical cones.
The upper bound also follows from \cite{BayerPeevaSturmfels:1998}; cf.\ Section \ref{sec:concluding}.
For a subset $G$ of $n$ points in $\RR^d$, they define an open \emph{empty orthant} as the product of open intervals $O = \prod_{i=1}^{d}(-\infty,a_i)$ where $a = (a_1, \ldots, a_d) \in \RR^d$ is chosen such that $O$ does not contain a point of $G$.
The empty orthant $O$ is \emph{maximal} if any increase in a coordinate of $a$ would yield a point of $G$ contained in $O$. By Lemma~\ref{lem:check-extremal}, the apex $a$ of a maximal empty orthant is precisely an extremal generator of $\closedcomplementarymonomial{G}$. 
With the reduction from orthants to boxes by doubling the coordinates before \cite[Theorem 2.4]{KaplanEtAl:2008}, one can apply the construction of \cite[Lemma 3.3]{KaplanEtAl:2008} to obtain a lower bound on the number of maximal empty orthants with respect to a given set~$G$.
This translates to a lower bound on the number of extremal generators of $\closedcomplementarymonomial{G}$.
In this way the argument of Kaplan et al.\ shows that the upper bound in Theorem~\ref{thm:upper-bound} is actually asymptotically tight, even for monomial tropical cones.

\begin{corollary}\label{cor:asymptotic-maximum}
  For $d$ an absolute constant, the maximal number of extremal generators of a (monomial) tropical cone in $\TTmin^{d+1}$ defined as the intersection of $n$ tropical halfspaces lies in $\Theta(n^{\lfloor d/2 \rfloor})$.
\end{corollary}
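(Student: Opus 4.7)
The proof is essentially a matching pair of asymptotic bounds, and most of the work has already been done in the preceding paragraphs; the corollary is just the synthesis.

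\textbf{Upper bound.} The plan is to apply Theorem~\ref{thm:upper-bound} directly, which gives the bound $U(n+d,d)$ on the number of extremal generators. Then I would observe, as noted immediately after the definition~\eqref{eq:upper-bound}, that for fixed $d$ the quantity $U(n+d,d)$ lies in $\Theta(n^{\lfloor d/2 \rfloor})$. This is a short binomial calculation: each summand is a polynomial of degree $\lfloor d/2 \rfloor$ (respectively $\lceil d/2\rceil - 1 \leq \lfloor d/2 \rfloor$) in the numerator variable, and shifting by the additive constant $d$ in the first argument does not change the leading order. Since the monomial tropical cones form a subclass of all tropical cones, this upper bound applies a fortiori to them.

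\textbf{Lower bound.} For matching from below I would invoke the construction of Kaplan et al.\ described in the paragraph before the corollary. The plan is: start from a set $G \subset \RR^d$ of $n$ points, regarded via the embedding $\{0\}\times\RR^d\hookrightarrow\RR^{d+1}$ as a set in $\TTmax^{d+1}$ with support containing $0$, and form the monomial tropical cone $\monomial{G}$. By Corollary~\ref{cor:complement}, the complementary monomial $\tmi$-tropical cone $\complementarymonomial{G}$ is the intersection of the $n$ tropical halfspaces with apices in $G$, and its extremal generators are exactly the apices of maximal empty orthants with respect to $G$ (this is the content of Lemma~\ref{lem:check-extremal} applied to translate the orthant condition into the extremality condition~\eqref{eq:extremal-index}). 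The Kaplan--Rubin construction (via the reduction from orthants to boxes cited from \cite[Lemma 3.3]{KaplanEtAl:2008}) exhibits, for infinitely many $n$, a configuration of $n$ points admitting $\Omega(n^{\lfloor d/2 \rfloor})$ maximal empty orthants. Translated through our correspondence, this produces a monomial tropical cone defined by $n$ halfspaces whose set of extremal generators has cardinality $\Omega(n^{\lfloor d/2 \rfloor})$.

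\textbf{Main obstacle.} Nothing here is really hard, since both the upper and lower bound are invocations of results that have already been quoted and linked to our setting. The only subtlety I would be careful about is the bookkeeping in the lower-bound step: one must verify that the set $G$ appearing in the Kaplan et al.\ construction, after the orthant-to-box reduction and the embedding into $\TTmax^{d+1}$, really yields the same combinatorial object whose cardinality we are trying to lower-bound, i.e.\ that maximal empty orthants of $G$ correspond bijectively to extremal generators of $\complementarymonomial{G}$ and not merely injectively. This was prepared by Lemma~\ref{lem:check-extremal}, so I would just cite it. The corollary then follows by combining the two bounds.
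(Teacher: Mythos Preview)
Your proposal is correct and matches the paper's approach exactly: the corollary is stated without its own proof environment, being the direct synthesis of Theorem~\ref{thm:upper-bound} for the upper bound and the Kaplan et al.\ construction (translated via Lemma~\ref{lem:check-extremal}) for the lower bound, precisely as you describe. Your additional care about the bijection between maximal empty orthants and extremal generators is, if anything, more explicit than the paper itself.
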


\section{Computing the nondominated set} \label{sec:computing-nondominated-set}
\noindent
We consider the multicriteria optimization problem $\min f(x)$ for $x\in X$, where $f$ is a $d$-tuple of objective functions as in \eqref{eq:multiopt}.
Our main focus lies on the outcome space $Z = f(X)$, which is a subset of $\RR^d$.
Following \cite[Table 1.2 and Def.~2.1(6)]{Ehrgott:2005} we let $w\leqq z$ if $w_i\leq z_i$ for all $i\in[d]$, and this defines a partial ordering on $\RR^d$.
For any subset $S\subset\RR^d$ the minimal elements with respect to $\leqq$ form the \emph{nondominated points}.

We say that a multicriteria optimization problem is \emph{discrete} if the nondominated set is finite and nonempty.
Note that the nondominated set can be empty even if the feasible set is not, e.g., if the feasible set is $\ZZ^d$.
Furthermore, if a problem is discrete then the \emph{ideal point} defined by the componentwise infimum of $Z$ is finite. 
In the literature, a multicriteria optimization problem is usually called `discrete' if the feasible set is finite.
In our setting the generalization is more natural.

\begin{lemma}[{\cite[Proposition 2.3]{Ehrgott:2005}}]\label{lem:ehrgott}
  For any set $S\subset\RR^d$ the nondominated set of $S$ equals the nondominated set of $S + \RR_{\geq 0}^d$.
\end{lemma}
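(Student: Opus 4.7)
The plan is to establish set equality by proving both inclusions, using only the definition of the partial order $\leqq$ and the fact that adding a nonnegative vector weakly increases every coordinate. Let $N(T)$ denote the nondominated set of a subset $T\subseteq\RR^d$.

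First I would show that $N(S)\subseteq N(S+\RR_{\geq 0}^d)$. Pick $z\in N(S)$; in particular $z\in S\subseteq S+\RR_{\geq 0}^d$. Suppose, for contradiction, there exists $w\in S+\RR_{\geq 0}^d$ with $w\leqq z$ and $w\neq z$. Write $w=s+r$ with $s\in S$ and $r\in\RR_{\geq 0}^d$. Then $s\leqq s+r=w\leqq z$. If $s=z$, then $r=w-z\leqq 0$ combined with $r\geqq 0$ forces $r=0$ and hence $w=z$, a contradiction. Therefore $s\neq z$, and we have found $s\in S$ with $s\leqq z$, $s\neq z$, contradicting $z\in N(S)$.

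Next I would prove the reverse inclusion $N(S+\RR_{\geq 0}^d)\subseteq N(S)$ in two small steps. Let $z\in N(S+\RR_{\geq 0}^d)$. \emph{Step (a):} $z\in S$. Indeed, write $z=s+r$ with $s\in S$, $r\in\RR_{\geq 0}^d$; then $s\in S+\RR_{\geq 0}^d$ and $s\leqq z$. Nondomination of $z$ forces $s=z$, i.e., $r=0$, so $z=s\in S$. \emph{Step (b):} $z$ is nondominated in $S$. If some $w\in S$ satisfied $w\leqq z$ and $w\neq z$, then $w\in S\subseteq S+\RR_{\geq 0}^d$ would contradict $z\in N(S+\RR_{\geq 0}^d)$.

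There is no real obstacle here; the argument is a direct unpacking of definitions. The only thing to be careful about is the case $s=z$ in the first inclusion, where one must use that $r$ is \emph{both} $\leqq 0$ and $\geqq 0$, hence zero, to avoid the spurious possibility $w\neq z=s$ with $r\neq 0$. Once that is handled, both inclusions reduce to a single line each.
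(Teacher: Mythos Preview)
Your argument is correct. The paper does not actually supply a proof of this lemma; it simply records the statement with a citation to \cite[Proposition~2.3]{Ehrgott:2005}. Your direct verification of both inclusions from the definition of nondominated points is exactly the standard elementary argument, and the case distinction $s=z$ versus $s\neq z$ in the first inclusion is handled cleanly.
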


\begin{figure}[htb]
  \centering
  \renewcommand\inflen{5}

\begin{tikzpicture}

  \fill[DominatedSet] (-3, \inflen) -- (-3,2) -- (0.1,2) -- (0.1,\inflen) -- cycle;
  \fill[DominatedSet] (0,0) -- (\inflen,0) -- (\inflen, \inflen) -- (0,\inflen);

  \fill[SearchRegion] (-\inflen, \inflen) -- (-3, \inflen) -- (-3, -1.7) -- (-\inflen, -1.7) --cycle;
  \fill[SearchRegion] (-3,2) -- (0,2) -- (0, -1.7) -- (-3, -1.7) --cycle;
  \fill[SearchRegion] (0,0) -- (\inflen,0) -- (\inflen, -1.7) -- (0, -1.7) -- cycle;
  
  \node[Apex1, label=above right:{$g = (0,0)$}] (g) at (0,0) {};
  \node[Apex1, label=above right:{$h = (-3,2)$}] (h) at (-3,2) {};

  \draw[Border] (-3, \inflen+0.2) -- (h) -- (0,2) -- (g) -- (\inflen+0.2,0);

  \node[Apex2] at (0,2) {};
  \node[Apex2] at (-3,\inflen+0.2) {};
  \node[Apex2] at (\inflen+0.2,0) {};

  \node[ConeLabel] at (3,3) {$\monomial{g,h}$};
  \node[ConeLabel] at (-2.3,-0.6) {$\complementarymonomial{g,h}$};

\end{tikzpicture}

  \caption{The real part of the monomial tropical cone $\monomial{g,h}$ from Example~\ref{ex:multi}.
    Compare with Figure~\ref{fig:complementary-cones}, which shows $\monomial{f,g,h}$.}
  \label{fig:multi}
\end{figure}
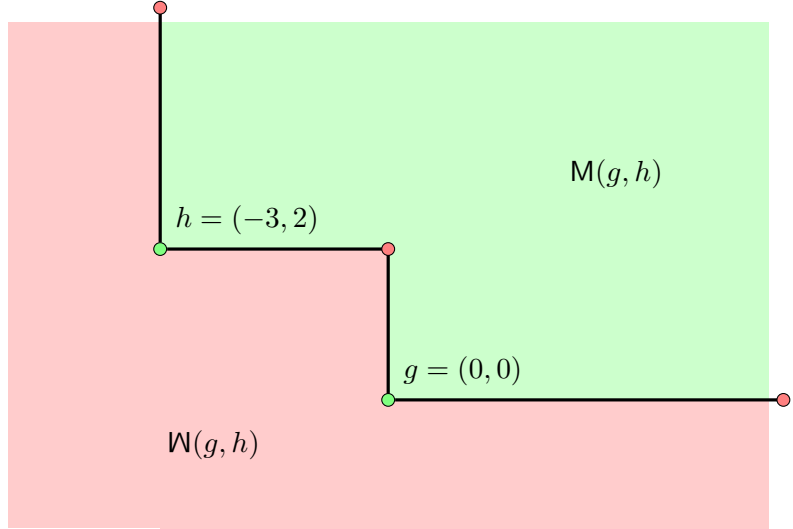

\begin{example}\label{ex:multi}
  Consider the multiobjective optimization problem given by
  \[
  \min \begin{pmatrix} -3 & 1 & 1\\ 2 & 1 & 1 \end{pmatrix} \cdot x \quad 
  \text{subject to } x \in \{0,1\}^3 \enspace .
  \]
  The eight points in $\{0,1\}^3$ form the feasible set in the decision space $\ZZ^3$.
  The outcome space is the set $Z=\{(-3,2), (-2,3), (-1,4), (0,0), (1,1), (2,2) \}$. 
  The two points $g=(0,0)$ and $h=(-3,2)$ are the only nondominated points of $Z$; cf.~Example~\ref{ex:complementary-cones}.
  The situation is depicted in Figure~\ref{fig:multi}.
\end{example}

Let $N$ be the set of nondominated points of $Z$. Then the set $Z + \RR^d_{\geq 0}$ agrees with $N + \RR^d_{\geq 0}$; cf.~Lemma~\ref{lem:ehrgott}.
Therefore, by Lemma~\ref{lem:monomial-maxcone}, the set $N + \RR^d_{\geq 0}$ agrees with the intersection of the real part of the \tma-tropical cone $\monomial{N}$ with the hyperplane $z_0 = 0$.
Note that here and below we identify $\RR^d$ with the real hyperplane $\smallSetOf{z\in\RR^{d+1}}{z_0=0}$ in $\TTmax^{d+1}$.
This is the reason why we started the labeling of the coordinates with zero in the previous section.
Our motivation to study monomial tropical cones comes from the following, which is a direct consequence of Theorem~\ref{thm:complementary-cones} together with Lemma~\ref{lem:interior-union-sectors}.
It recovers the decomposition in \cite[Proposition 2.3]{DaechertEtAl}.
\enlargethispage{2\baselineskip}

\begin{corollary} \label{cor:nondominated-set-monomial-cone}
  The nondominated set $N$ of $Z$ agrees with the set of those extremal generators of $\closedmonomial{N}$ which have finite coordinates.
  Moreover, if $A$ is the set of extremal generators of the complementary tropical cone $\closedcomplementarymonomial{N}$, then the set
  \[
  \bigcup_{a\in A} a - \RR_{> 0}^d
  \]
  agrees with the complement of $\monomial{N}$ in $\RR^d$.
\end{corollary}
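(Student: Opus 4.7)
The plan is to derive both statements directly from Lemma~\ref{lem:monomial-maxcone}, Theorem~\ref{thm:complementary-cones}, and Corollary~\ref{cor:complement}, with the definition of nondominated point supplying the extra input for the first part.

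For the first claim I would start from Lemma~\ref{lem:monomial-maxcone}: the closure $\closedmonomial{N}$ in $\TTmax^{d+1}$ is the \tma-tropical cone generated by $N\cup\maxunit$. Every vector in $\maxunit$ has a $-\infty$ entry, so none lies in $\RR^{d+1}$ and none contributes a finite-coordinate extremal generator. Conversely, for $n\in N$, I use the natural scaling $n_0=0$ inherited from $N\subset\RR^d\cong\{x_0=0\}$; every $m\in N$ also satisfies $m_0=0$, while $(-e^{(i)})_0=-\infty$ for each $-e^{(i)}\in\maxunit$. If one had
\[
n \ = \ \max_{m\in N\setminus\{n\}}(\lambda_m+m)\oplus\max_{i\in[d]}(\mu_i-e^{(i)}) \enspace ,
\]
comparison of the $0$th coordinates would force some $m^{\ast}\in N\setminus\{n\}$ with $\lambda_{m^{\ast}}=0$, and coordinate-wise inspection would then give $m^{\ast}\leqq n$, contradicting the nondominatedness of $n$. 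Hence every $n\in N$ is an extremal generator, and the first assertion follows.

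For the second claim, Theorem~\ref{thm:complementary-cones} tells us that $\complementarymonomial{N}$ is generated by $A\cup\minunit$, and Corollary~\ref{cor:complement} provides
\[
\RR^{d+1}\setminus\monomial{N} \ = \ \bigcup_{a\in A}\SetOf{x\in\RR^{d+1}}{x_0-a_0>\max(x_j-a_j\mid j\in\supp(a)\setminus\{0\})} \enspace .
\]
Intersecting with $\{x_0=0\}\cong\RR^d$ and tropically scaling each apex so that $a_0=0$, the $a$-th sector becomes $\{x\in\RR^d:x_j<a_j\text{ for all }j\in\supp(a)\setminus\{0\}\}=a-\RR^d_{>0}$, where the coordinates $j\notin\supp(a)$ (those with $a_j=\infty$) impose no constraint. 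Taking the union over $a\in A$ yields the stated decomposition of $\RR^d\setminus\monomial{N}$.

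The only step requiring genuine care is the extremality argument, because this is where the order-theoretic definition of ``nondominated'' gets translated into a tropical statement, and the role played by the homogenization coordinate $x_0=0$ is crucial. Everything else is bookkeeping with the results already established in Section~\ref{sec:tropical}.
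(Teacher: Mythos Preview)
Your proposal is correct and follows essentially the same route as the paper, which simply records the corollary as ``a direct consequence of Theorem~\ref{thm:complementary-cones} together with Lemma~\ref{lem:interior-union-sectors}'' (the latter two being exactly what underlies Corollary~\ref{cor:complement}). Your treatment of the first claim is in fact more explicit than the paper's: you spell out via the $0$th coordinate why no $n\in N$ can be a \tma-tropical combination of the remaining generators, whereas the paper leaves this to the reader after noting (just before the corollary) that $N\cup\maxunit$ generates $\closedmonomial{N}$ by Lemma~\ref{lem:monomial-maxcone}.
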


Here, we discard the $0$th coordinate of the elements of $A$, which we can assume to be $0$ by Lemma~\ref{lem:monomial-halfspace}. 
Furthermore, we slightly abuse notation by identifying $a - \RR_{> 0}^d$ with
\[
\SetOf{x\in\RR^{d}}{0 > \max(x_j-a_j \mid j\in \supp(a))} \enspace .
\]

The next steps follow D\"achert and Klamroth \cite[\S 4 \& \S 5]{Daechert2015}.
Since our algorithm for computing the nondominated set $N$ is iterative we consider the situation where a subset $N'\subset N$ of the nondominated points is already given.
In the following let $A'\subset\TTmin^{d+1}$ be the set of extremal generators of the complementary tropical cone $\closedcomplementarymonomial{N'}$. 
The set $A'$ is never empty, even if $N'$ is.
For a given point $a\in A'$ and $i\in[d]$ we consider the auxiliary optimization problem
\begin{equation}\label{eq:scalar:step1}
  \begin{array}{ll}
    \min & z_i \\
    \mbox{subject to } & z_{j} < a_{j} \qquad \mbox{ for all } j \in \supp(a) \setminus \{0,i\} \\ 
    & z\in Z 
  \end{array}
\end{equation}
with respect to the scalar objective function $z_i$.
The scalarization technique to obtain the optimization problem \eqref{eq:scalar:step1} is known as the `$\epsilon$-constraint method' \cite[\S4.1]{Ehrgott:2005}.
If it does not have a feasible solution then there is no nondominated point contained in the set $Z \cap (a - \RR_{>0}^d)$.

Otherwise there is an optimal feasible point $w\in\RR^d$.
Then we consider as a second auxiliary optimization problem
\begin{equation}\label{eq:scalar:step2}
  \begin{array}{ll}
    \min & \sum_{j = 1}^{d} z_{j}\\
    \mbox{subject to } & z_{k} \leq w_{k} \qquad \mbox{ for all } k \in [d] \\
    & z\in Z  \enspace .
  \end{array}
\end{equation}
Notice that $w$ is a feasible solution for \eqref{eq:scalar:step2}.
To assert the existence of a finite optimal solution in \eqref{eq:scalar:step2} we assume from now on that our problem is discrete. Then the ideal point puts a lower bound on the feasible set of \eqref{eq:scalar:step2}.
The optimal solution is a new nondominated point in the complement~$N\setminus N'$.
The optimization problem \eqref{eq:scalar:step2} is a version of the `hybrid method' \cite[\S4.2]{Ehrgott:2005}. We chose this scalarization to give a clear and self-contained picture but also other scalarization methods can be applied here, cf. \cite{Ehrgott:2005}.

By Corollary~\ref{cor:nondominated-set-monomial-cone}, for each nondominated point $g$ in $N\setminus N'$ there is an extremal generator $a$ of $\closedcomplementarymonomial{N'}$ such that $g \in Z \cap (a - \RR_{> 0}^d)$; recall that $a$ may have infinite coordinates while $g$ does not.
On the other hand, by Lemma~\ref{lem:farkas}, if \eqref{eq:scalar:step1} has no feasible solution then there is no nondominated point in $(a - \RR_{> 0}^d)$.
We denote the oracle subsuming scalarizations for finding another nondominated point by \textproc{NextNonDominated}($Z,a$).
Here, $a$ is a point in $\TTmin^{d+1}$ discarding the $0$th entry as it can be assumed to be zero.
This method can be thought of as the solution arising from Equations~\eqref{eq:scalar:step1} and~\eqref{eq:scalar:step2}.
Note that we augment the resulting point by a $0$th coordinate equal to zero to fit the input format of Algorithm~\ref{algo:newextremals}.
We switch between these two points of view without any further notice to avoid notational overhead.

Now we have all the ingredients for the generation of the set of nondominated points. 
The key idea is to develop a sequence of monomial tropical cones, as in the tropical double description method~\cite{DoubleDescription:2010}.
The procedure \textproc{NewExtremals} from Algorithm~\ref{algo:newextremals} avoids redundant generators; this reduces the number of scalarizations to the bare minimum.
The nondominated points arise as the extremal generators of the max-tropical cones.
In contrast to the double description method for general tropical cones, however, in the monomial case all extremal generators of one tropical cone survive as extremal generators of the successor.
The reason is Theorem~\ref{thm:complementary-cones} which establishes that the exterior description of $\closedmonomial{G}$ agrees with the interior description of $\closedcomplementarymonomial{G}$.
This shows how the algorithms developed in~\cite{PrzybylskiGandibleuxEhrgott:2010,KLV2015,DaechertEtAl} can be considered as (dual) tropical convex hull computations.
The concept based on successive updates of the search region is well known in multicriteria optimization; cf.\ \cite[Algorithm~1]{KLV2015}.

\begin{algorithm}[htbp]
\caption{Nondominated set} \label{algo:non-dominated-set}
  \begin{algorithmic}[1]
    \Require{Outcome space $Z \subset \RR^d$, implicitly given by the objective function and the description of the feasible set.}
    \Ensure{The set of nondominated points.}
    \State $A \gets \minunit \cup e^{(0)}$ \label{line:first}
    \State $G \gets \emptyset$ 
    \State $\Omega \gets \minunit$  \label{line:end-initialization} 
    \While{$A \neq \Omega$}  \label{line:loop-extremals} 
    \State pick $a$ in $A\setminus\Omega$ \label{line:pick}
    \State $g \gets$ \Call{NextNonDominated}{$Z,a$}
    \If{$g \neq $ None} \label{line:start-case-distinction}
    \State $A \gets $ \Call{NewExtremals}{$G,A,g$} \label{line:A}
    \State $G \gets G \cup \{g\}$ \label{line:add-new-nondominated}
    \Else
    \State $\Omega \gets \Omega\cup\{a\}$ \label{line:add-new-extremal}
    \EndIf \label{line:updated-new-inequality}
    \EndWhile
    \State \Return $G$
  \end{algorithmic}
\end{algorithm}

The search regions arising as input for the procedure \textproc{NextNonDominated} in Algorithm~\ref{algo:non-dominated-set} and for the program $P$ in~\cite[Algorithm~1]{KLV2015} are the same (except maybe for the ordering).
We prove the correctness of our algorithm via standard results from tropical convexity. 
In the sequel we will denote the number of nondominated points by $n=|N|$.
That number is finite as we assumed our optimization problem to be discrete.
The $d$ points in $\minunit$ are always among the extremal generators of the monomial tropical cone $\closedcomplementarymonomial{N}$, even if $N$ is empty.
These are the \emph{trivial} extremal generators.
We let $m$ be the number of the remaining extremal generators of $\closedcomplementarymonomial{N}$ which are nontrivial.

\begin{theorem}\label{thm:main}
  Algorithm~\ref{algo:non-dominated-set} returns the set of nondominated points of $Z$ after $n+m$ iterations.
\end{theorem}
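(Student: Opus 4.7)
The plan is to establish a loop invariant for Algorithm~\ref{algo:non-dominated-set}, show that its termination condition forces $G=N$, and then count successes and failures separately to arrive at exactly $n+m$ iterations.

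First I would prove that the following holds at the start of every iteration of the main while-loop: (i)~$G\subseteq N$; (ii)~either $G=\emptyset$ and $A=\minunit\cup\{e^{(0)}\}$, or $A$ is exactly the set of extremal generators of the complementary monomial tropical cone $\complementarymonomial{G}$; and (iii)~$\minunit\subseteq\Omega\subseteq A$, and for every $a\in\Omega$ no point of $N$ is contained in the strict sector $Z\cap(a-\RR_{>0}^d)$. Invariant~(i) follows because the scalarization scheme \eqref{eq:scalar:step1}--\eqref{eq:scalar:step2} that defines \textproc{NextNonDominated} either returns a nondominated point or None, as was explained in the paragraphs preceding the algorithm. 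Invariant~(ii) relies on the initialization (writing $\RR^{d+1}$ as the tropical cone generated by $\minunit\cup\{e^{(0)}\}$ when $G=\emptyset$) together with the correctness of \textproc{NewExtremals} stated in Proposition~\ref{prop:newextremals}. Invariant~(iii) holds because $a$ is placed in $\Omega$ only after the first auxiliary problem \eqref{eq:scalar:step1} fails for every admissible $i$, certifying that the strict sector is empty of $Z$.

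Once the invariant is in hand, the termination condition $A=\Omega$ combined with~(iii) says that every extremal generator of $\complementarymonomial{G}$ has empty strict sector in $N$. By Corollary~\ref{cor:nondominated-set-monomial-cone}, each point of $N\setminus G$ would lie in the strict sector of some extremal generator of $\complementarymonomial{G}$, so $N\setminus G=\emptyset$, and combined with~(i) this gives $G=N$. For the iteration count I would separate the iterations into \emph{successes}, where \textproc{NextNonDominated} returns a genuine new nondominated point and hence $|G|$ grows by one, and \emph{failures}, where it returns None and hence $|\Omega|$ grows by one. By~(i) and the previous observation, there are exactly $n$ successes, one per element of $N$. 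At termination $A=\Omega$ equals the extremal generators of $\complementarymonomial{N}$, which split into the $d$ trivial elements of $\minunit$ and the $m$ nontrivial ones; since $\Omega$ begins at $\minunit$, the number of failures equals~$m$. The total is $n+m$.

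The main obstacle is verifying the persistence statement hidden inside invariants~(ii) and~(iii): namely, that $\Omega$ really remains a subset of $A$ throughout the run, equivalently, that an element $a\in\Omega$ keeps its status as an extremal generator of $\complementarymonomial{G'}$ for every later $G'\supseteq G$. By Lemma~\ref{lem:farkas}, $a$ can lose extremality only if some $g\in G'\setminus G$ satisfies $g\leqq a$ componentwise; since $a$ entered $\Omega$ with empty strict sector, such a $g\in N$ would have to share a coordinate value with $a$ on some index $j\in\supp(a)\setminus\{0\}$. Ruling this out uses the detailed structure of the scalarization producing $g$: the point $g$ was returned by \textproc{NextNonDominated} for a different apex $a'$, hence satisfies $g_\ell<a'_\ell$ strictly for all $\ell\in\supp(a')\setminus\{0\}$, and a case analysis on the supports of $a$ and $a'$ together with the sum-minimization in \eqref{eq:scalar:step2} shows that a coordinate equality $g_j=a_j$ would contradict the infeasibility that placed $a$ into $\Omega$ in the first place. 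This persistence is the only non-routine step; once it is established, both the termination argument and the bound $n+m$ follow cleanly.
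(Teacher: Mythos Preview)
Your overall architecture---loop invariant, termination via $A=\Omega$, and the $n+m$ count by separating successes from failures---matches the paper's proof almost exactly. The paper phrases the third invariant as ``$\Omega$ is a subset of those points in $A$ which are also extremal generators of $\complementarymonomial{N}$'', which is equivalent to your (iii) in context, and it likewise observes that $|G|+|\Omega|$ strictly increases.

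Where you diverge is in the treatment of persistence, and here you make the problem harder than it is. You correctly notice that Lemma~\ref{lem:farkas}, as \emph{stated}, uses the closed orthant $\RR_{\geq 0}^d$, so the failure of \textproc{NextNonDominated} (which only empties the \emph{open} orthant) does not literally trigger its dichotomy. But look at the \emph{proof} of Lemma~\ref{lem:farkas}: what it actually establishes is the strict version, namely that an extremal generator $a$ of $\complementarymonomial{G'}$ stays extremal in $\complementarymonomial{G}$ unless some $g\in G$ lies in the open orthant $a-\RR_{>0}^d$. Equivalently, by Lemma~\ref{lem:interior-union-sectors} the condition ``no $g\in G$ with $g_j<a_j$ for all $j$'' is exactly membership $a\in\complementarymonomial{G}$, and the extremality witnesses from Lemma~\ref{lem:check-extremal} for $G'$ carry over verbatim to any $G''\supseteq G'$. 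So once $a$ enters $\Omega$ with empty open orthant in $N$, it is immediately extremal in $\complementarymonomial{G''}$ for every $G'\subseteq G''\subseteq N$, in particular for $G''=N$ and for every later value of $G$. (You can also read this off directly from Algorithm~\ref{algo:newextremals}: $a\notin A^{\geq}$ means precisely $h_i<a_i$ for all $i$, so any $a\in\Omega$ always lands in $A^{\geq}$ and is kept.)

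Your proposed route through the scalarization details is therefore unnecessary, and as written it is also incomplete: the assertion that the returned $g$ satisfies $g_\ell<a'_\ell$ for \emph{all} $\ell\in\supp(a')\setminus\{0\}$ does not follow from \eqref{eq:scalar:step1}--\eqref{eq:scalar:step2}, since the coordinate $i$ being minimized in \eqref{eq:scalar:step1} is not constrained below $a'_i$, and the hybrid step \eqref{eq:scalar:step2} may well return a nondominated point outside the open orthant at $a'$. Replace that paragraph with the two-line argument above and your proof is complete and essentially identical to the paper's.
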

\begin{proof}
  Let $N$ be the set of nondominated points of $Z$.
  To show correctness, we will maintain the following invariant:

  \begin{itemize}
  \item $G$ is a successively increasing subset of $N$. 
    Further, $A$ is the set of extremal generators of~$\closedcomplementarymonomial{G}$. 
    Finally, each point in $\Omega\subseteq A$ is also one of the extremal generators of $\closedcomplementarymonomial{N}$.
  \end{itemize}

  After the initialization in lines~\ref{line:first} to~\ref{line:end-initialization}, the invariant is fulfilled. By Lemma~\ref{lem:farkas}, in the case distinction in lines~\ref{line:start-case-distinction} to~\ref{line:updated-new-inequality}, either $g$ is a new nondominated point in $N$, or $a$ is certified to be an extremal generator of $\closedcomplementarymonomial{N}$. Hence, the invariant is preserved.
Furthermore, Proposition~\ref{prop:newextremals} implies that \textproc{NewExtremals} from Algorithm~\ref{algo:newextremals} correctly returns the set of extremal generators of $\closedcomplementarymonomial{G\cup\{g\}}$.

Observe that in lines~\ref{line:add-new-nondominated} and~\ref{line:add-new-extremal}, only new points are added to the sets $G$ and $\Omega$, respectively. In particular, the sum of cardinalities $|G| + |\Omega|$ increases in each iteration of the loop starting in line~\ref{line:loop-extremals}.

As $\Omega = \minunit$ in the beginning, these trivial extremal generators do not contribute to the number of iterations.
\end{proof}

\begin{corollary}\label{cor:complexity}
  For fixed $d$ the maximal number of scalarizations lies in $\Theta(n^{\lfloor d/2 \rfloor})$.
\end{corollary}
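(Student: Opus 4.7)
The plan is to combine the iteration count from Theorem~\ref{thm:main} with the tropical upper bound theorem and the matching lower bound coming from Corollary~\ref{cor:asymptotic-maximum}.

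First, I would observe that each iteration of the main loop of Algorithm~\ref{algo:non-dominated-set} triggers at most a bounded number of calls to the scalarization routine: one instance of \eqref{eq:scalar:step1} to test feasibility, followed, if feasible, by one instance of \eqref{eq:scalar:step2} to produce the new nondominated point. Hence the total number of scalarizations is of the same order as the iteration count, which by Theorem~\ref{thm:main} equals $n+m$, where $m$ is the number of nontrivial extremal generators of the complementary monomial tropical cone $\complementarymonomial{N}$.

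For the upper bound, I would apply Corollary~\ref{cor:complement}, which gives an exterior description of $\complementarymonomial{N}$ by $n$ \tmi-tropical halfspaces, one per apex in $N$. Theorem~\ref{thm:upper-bound} then yields
\[
  m + d \ \leq \ U(n+d,\,d) \enspace ,
\]
and the explicit form of $U(\cdot,\cdot)$ in \eqref{eq:upper-bound} places the right hand side in $\bigO(n^{\lfloor d/2\rfloor})$ for $d$ fixed. For $d\geq 2$ the summand $n$ is dominated, and the edge case $d=1$ is trivial since there $n=1$. This gives $n+m \in \bigO(n^{\lfloor d/2\rfloor})$.

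For the matching lower bound I would invoke Corollary~\ref{cor:asymptotic-maximum}, which asserts that the upper bound is realized asymptotically by monomial tropical cones. Concretely, the Kaplan--et--al.\ construction produces point configurations $G\subset\RR^d$ for which $\complementarymonomial{G}$ has $\Theta(n^{\lfloor d/2\rfloor})$ nontrivial extremal generators, where $n=|G|$; taking the outcome space $Z := G$ (e.g.\ via the identity objective on the finite set $G$) yields a discrete multicriteria problem whose nondominated set is $G$ itself, and on which Algorithm~\ref{algo:non-dominated-set} must perform $\Omega(n^{\lfloor d/2\rfloor})$ iterations, hence scalarizations. The main subtlety in the argument is the additive $+d$ in Theorem~\ref{thm:upper-bound} (coming from the Puiseux lift and the nonnegativity constraints), which must be tracked to confirm it does not affect the $\Theta$-asymptotics for fixed $d$; beyond that the proof is a straightforward combination of the three ingredients above.
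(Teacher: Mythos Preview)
Your proposal is correct and follows essentially the same route as the paper's proof, which simply cites Theorem~\ref{thm:main} for the bound $n+m$ on the number of scalarizations and then invokes Corollary~\ref{cor:asymptotic-maximum} for the $\Theta(n^{\lfloor d/2\rfloor})$ asymptotics. You have merely unpacked Corollary~\ref{cor:asymptotic-maximum} into its two constituents (Theorem~\ref{thm:upper-bound} for the upper bound, the Kaplan et al.\ construction for the lower bound) and made explicit the constant-factor relation between iterations and scalarization calls; the only additional care needed in your lower-bound step is that the extremal configuration $G$ be an antichain so that $N=G$, but this is harmless since the upper bound itself forces $|N|=\Theta(|G|)$ whenever $\complementarymonomial{G}$ has $\Omega(|G|^{\lfloor d/2\rfloor})$ extremal generators.
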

\begin{proof}
  By Theorem~\ref{thm:main} the number of scalarizations is bounded by $n+m$. The claim now follows from Corollary~\ref{cor:asymptotic-maximum}.
\end{proof}

Allamigeon, Gaubert and Katz observed that determining the extremal generators can be interpreted in terms of hypergraph transversals~\cite[\S 3]{AllamigeonGaubertKatz:2011}; see also \cite{BEGKM:2002}.
Depending on how the feasible set is given, this may yield more efficient methods for computing the nondominated set.

The deduction of a practicable algorithm for more general multicriteria optimization problems from their generation method is left for future work.

\begin{example}\label{ex:full-exmp}
  As one non-trivial example we examine an instance of a classical type of multicriteria optimization problem considered, e.g., in \cite{ZitzlerThiele:1999}.
  Consider 
  \[
  P =
  \begin{pmatrix}
    -1 & -4 & -3 & -1 \\
    -4 & -1 & -2 & -2 \\
    -4 & -1 & -2 & -3
  \end{pmatrix} \,,
  \quad
  W =
  \begin{pmatrix}
    2 & 1 & 1 & 1 \\
    0 & 3 & 1 & 0 \\
    0 & 1 & 1 & 2
  \end{pmatrix} \,,
  \quad
  c = 
  \begin{pmatrix}
    2 \\ 3 \\ 2
  \end{pmatrix}
  \enspace .
  \]
  Then
  \begin{equation}\label{eq:full-exmp}
    \begin{array}{l}
      \min\ P \cdot x\\
      \mbox{subject to}\quad W \cdot x \leq c\ \mbox{ with }\ x \in \{0,1\}^4
    \end{array}
  \end{equation}
  is a multidimensional 0/1-knapsack problem with three linear objective functions given by the rows of~$P$.
  Usually knapsack problems are written as maximization problems, but since Algorithm~\ref{algo:non-dominated-set} is about minimizing, the entries of the matrix $P$ are negative numbers.
  We will not distinguish between row and column vectors in the sequel.
  It is not difficult to see that the feasible points in the decision space are precisely
  \[
  (0,0,0,0)\,,\ (1,0,0,0)\,,\ (0,1,0,0)\,,\ (0,0,1,0)\,,\ (0,0,0,1) \enspace .
  \]
  Their images in the outcome space are given by
  \[
  (0,0,0)\,,\ (-1,-4,-4)\,,\ (-4,-1,-1)\,,\ (-3,-2,-2)\,,\ (-1,-2,-3) \enspace .
  \]
  We now switch to minimizing $4\cdot\1 + P\cdot x$ instead of $P\cdot x$ as in \eqref{eq:full-exmp}.
  This translation in the outcome space does not change the structure of the problem in any way, but it helps to improve the readability since we can skip many minus signs.
  The translated points in outcome space are
  \[
  (4,4,4)\,,\ (3,0,0)\,,\ (0,3,3)\,,\ (1,2,2)\,,\ (3,2,1) \enspace .
  \]

  We now demonstrate how Algorithm~\ref{algo:non-dominated-set} computes the nondominated points.
  For the scalarizations implicitly solved to obtain \textproc{NextNonDominated}, we do not make a particular choice but just consider the function as an oracle.
  This does not simplify the problem but allows us to emphasize on the crucial aspects.
  
  The initialization yields $A = \{(\infty,0,\infty,\infty),(\infty,\infty,0,\infty),(\infty,\infty,\infty,0),(0,\infty,\infty,\infty)\}$, $G = \emptyset$ and $\Omega = \{(\infty,0,\infty,\infty),(\infty,\infty,0,\infty),(\infty,\infty,\infty,0)\}$.
  Since $A\setminus\Omega$ is a singleton the only choice in line~\ref{line:pick} in the first iteration is $a = (0,\infty,\infty,\infty)$.
  By minimizing $z_3$ in the scalarization procedure \eqref{eq:scalar:step1} we obtain $(3,0,0)$ as the optimal solution.

  Hence in line~\ref{line:A} we step into Algorithm~\ref{algo:newextremals} with $h = (3,0,0)$.
  The set $A^{\geq}$ comprises $\minunit$ and its complement equals $\{(0,\infty,\infty,\infty)\}$. 
  The additionally generated points are 
  \begin{align*}
    (0,3,\infty,\infty) \ &= \ \min\bigl((3,3,3,3) + (\infty,0,\infty,\infty),\, (0,\infty,\infty,\infty)\bigr) \\
    (0,\infty,0,\infty) \ &= \ \min\bigl((0,0,0,0) + (\infty,\infty,0,\infty),\, (0,\infty,\infty,\infty)\bigr) \\
    (0,\infty,\infty,0) \ &= \ \min\bigl((0,0,0,0) + (\infty,\infty,\infty,0),\, (0,\infty,\infty,\infty)\bigr) \enspace ,
  \end{align*}
  and all of them are extremal; to see this check with Lemma~\ref{lem:inner-extremal}.

  \smallskip

  We arrive at the second iteration.
  Suppose we pick $a = (0,3,\infty,\infty)$, and the scalarization \eqref{eq:scalar:step1} with $i=1$ provides us with the next nondominated point $g = (0,3,3)$ which, indeed, satisfies $(0,3,3) < (3,\infty,\infty)$.

  Again we enter \textproc{NewExtremals}, now with $G = \{(3,0,0)\}$ and $h = (0,3,3)$.
  We obtain 
  $A^{\geq} = \minunit \cup \{(0,\infty,0,\infty), (0,\infty,\infty,0)\}$ and $A\setminus A^{\geq}=\{(0,3,\infty,\infty)\}$.
  Among the possible new generators derived from the pairs, e.g., we get
  \begin{align*}
    \min\bigl(3\cdot\1 + (\infty,\infty,0,\infty),\,(0,3,\infty,\infty)\bigr)\ &=\ \min\bigl(3\cdot\1 + (0,\infty,0,\infty),\,(0,3,\infty,\infty)\bigr)\\ &=\ (0,3,3,\infty)
  \end{align*}
  for $(\infty,\infty,0,\infty)$ and $(0,\infty,0,\infty)$ in $A^{\geq}$.
  The computation of the candidates for all pairs in line~\ref{line:pairs-in-out} ultimately results in the three extremal generators 
  \[
  (0,3,3,\infty)\,,\ (0,3,\infty,3)\,,\ (0,0,\infty,\infty)
  \]
  of $\closedcomplementarymonomial{G\cup\{h\}}$ which are not contained in $A^{\geq}$.
  
  \smallskip

  Now, suppose that in the next three iterations $a$ successively attains the values $(0,\infty,0,\infty)$, $(0,\infty,\infty,0)$, $(0,0,\infty,\infty)$.
  None of the corresponding scalarizations \eqref{eq:scalar:step1} has a solution, and so these points are added to $\Omega$.

  \smallskip

  In the sixth iteration for either $a = (0,3,3,\infty)$ or $a = (0,3,\infty,3)$ the next nondominated point is $g = (1,2,2)$.
  In this case we once more enter the procedure \textproc{NewExtremals}.
  There we get $A^{\geq} = \Omega$, which currently contains six points, and $A \setminus A^{\geq} = \{(0,3,3,\infty),(0,3,\infty,3)\}$.
  For instance, this yields the candidate point
  \[
  \min\bigl(2\cdot\1 + (0,\infty,0,\infty),\, (0,3,\infty,3)\bigr) \ = \ (0,3,2,3) \enspace .
  \]
  However, using Lemma~\ref{lem:check-extremal} in Algorithm~\ref{algo:newextremals} reveals that it is not extremal:
  Indeed, the minima in \eqref{eq:extremal-index} for the apices in
  \[
  G \cup \{h\} \ = \ \{(3,0,0),\,(0,3,3),\,(1,2,2)\}
  \]
  are attained at the index sets $\{0,1\}$, $\{0,2\}$ and $\{0,2\}$, respectively, but never at the index~$3$.
  Finally, the additional extremal generators are
  \[
  (0,1,3,\infty)\,,\ (0,1,\infty,3)\,,\ (0,3,2,\infty)\,,\ (0,3,\infty,2) \enspace .
  \]

  The above four extremal generators lead to four more iterations.
  In each case the corresponding scalarization is infeasible, which certifies that we already found all nondominated points.
  Hence, the Algorithm~\ref{algo:non-dominated-set} terminates and returns the set $\{(3,0,0),(0,3,3),(1,2,2)\}$.
  The total number of calls to the procedure \textproc{NextNonDominated} equals ten.
  This is also the sum of the number of nondominated points and of the extremal generators, as dictated by Theorem~\ref{thm:main}.
\end{example}


\section{Concluding remarks and open questions}\label{sec:concluding}
\noindent
For any field $K$ consider the polynomial ring $R=K[x_1,\dots,x_d]$ in $d$ indeterminates.
An ideal $I$ in $R$ is \emph{monomial} if it is generated by monomials, i.e., products of the indeterminates.
Via identifying the monomial $x_1^{a_1}x_2^{a_2}\cdots x_d^{a_d}$ with the lattice point $(a_1,a_2,\dots,a_d)$ in the positive orthant $\RR_{\geq 0}^d$ the set $M$ of all monomials in a given monomial ideal $I$ becomes a subset of $\NN^d$.
As $I$ is an ideal it follows that $M+\NN^d\subset M$.
Dickson's Lemma says that $M$ contains a unique finite subset which minimally generates $I$.
The minimal generators of the monomial $I$ correspond to the finite extremal generators of the monomial \tma-tropical cone $\closedmonomial{M}$.
In this sense the monomial tropical cones in $\TTmax^{d+1}$ generalize the monomial ideals in $R$.
The generators of the complementary monomial tropical cone $\closedcomplementarymonomial{M}$ correspond to the irreducible components of $I$.
That is to say, Theorem~\ref{thm:complementary-cones} generalizes the Alexander duality of monomial ideals \cite[\S5.2]{MillerSturmfels:2005}.
From this one can also see how the tropical convex hull computation generalizes the irreducible component computation for monomial ideals.
In the special case where the generators are squarefree, i.e., their exponent vectors consist of zeros and ones, the Alexander duality of monomial ideals agrees with the Alexander duality of finite simplicial complexes. 

For $d=3$ the common intersection of $\monomial{M}$ with $\complementarymonomial{M}$ is known as the \emph{staircase surface} of $I$; cf.~\cite[Chap.~3]{MillerSturmfels:2005}.
We denote its generalization to arbitrary $d$ as $\Sigma(I)$.
This is precisely the topological boundary of the projection of a monomial tropical cone in $\RR^{d+1}$ to $\RR^{d+1}/\RR\1$.
The covector decomposition of a tropical cone studied in \cite{JoswigLoho:2016} induces a polyhedral subdivision of $\Sigma(I)$, and this agrees with the `hull complex'; cf.~\cite[\S4.5]{MillerSturmfels:2005}.
The following seems promising.
\begin{question}
  Give an interpretation of the planar resolution algorithm from \cite[\S3.5]{MillerSturmfels:2005} and the hull resolution from \cite[\S4.4]{MillerSturmfels:2005} in terms of tropical convexity.
\end{question}

\smallskip

In view of the tropical upper bound theorem (Theorem~\ref{thm:upper-bound}) the number $m$ of extremal generators of a (monomial) tropical cone given as the intersection of $n$ tropical halfspaces is bounded by $U(d+n,d)$; cf.~\eqref{eq:upper-bound}.
Equivalently, the number $m$ of extremal generators translates to the number of scalarizations required for a $d$-criteria optimization problem with $n$ nondominated points.
It is known that that bound is not tight for all parameters; cf.~\cite{AGK:ExtremePoints2011}.
\begin{question}\label{prob:bound}
  Determine the exact upper bound for $m$ as a function of $n$ and $d$.
\end{question}
Already Bayer, Peeva and Sturmfels derive an upper bound theorem in \cite[Cor. 6.2 \& Thm. 6.3]{BayerPeevaSturmfels:1998} for monomial ideals from the upper bound theorem for polytopes.
It is known from work of Ho\c{s}ten and Morris \cite{HostenMorris:1999} that that upper bound can only be attained for special parameters; cf.~\cite[Thm.~6.33]{MillerSturmfels:2005}.

\smallskip

Related to Question~\ref{prob:bound} is the question what `combinatorial types' of monomial tropical cones can occur.
In contrast to general tropical cones each monomial tropical cone has a unique minimal exterior description in terms of the extremal generators of its complementary monomial tropical cone.
This leads to a well-defined notion of \emph{vertex-facet incidences} for monomial tropical cones.
\begin{question}
  Which bipartite graphs occur as the vertex-facet incidence graphs of monomial tropical cones?
\end{question}
In \cite{DaechertEtAl}, the ``neighborhood relation'' of the facets is applied to devise a more combinatorial update procedure for computing all nondominated points.  Studying the vertex-facet incidences further might unveil new aspects of their algorithm.

\smallskip

Finally, it is a natural question to ask how far our approach can be generalized.
\begin{question}
  To what extent does our approach generalize to multicriteria optimization problems which are not discrete?
\end{question}
It seems plausible to explore more general semigroup rings; e.g., cf.~\cite[Chap.~7]{MillerSturmfels:2005}.

\section*{Acknowledgments}
\noindent
We are much indebted to Ben Burton who brought the subject of multicriteria optimization and especially the work of D\"achert and Klamroth \cite{Daechert2015} to our attention.
Further, we would like to thank Kathrin Klamroth for pointing out \cite{KLV2015} and the work by Kaplan et al. \cite{KaplanEtAl:2008}.
We are grateful to the reviewer for helpful comments and for bringing \cite{PrzybylskiGandibleuxEhrgott:2010} to our attention.

\bibliographystyle{amsplain}
\bibliography{main}

\end{document}